\begin{document}
\newcommand{\chp}{\mathds{1}}

\newtheorem{thm}{Theorem}[section]
\newtheorem{theorem}{Theorem}[section]
\newtheorem{lem}[thm]{Lemma}
\newtheorem{lemma}[thm]{Lemma}
\newtheorem{prop}[thm]{Proposition}
\newtheorem{proposition}[thm]{Proposition}
\newtheorem{cor}[thm]{Corollary}
\newtheorem{defn}[thm]{Definition}
\newtheorem*{remark}{Remark}
\newtheorem{conj}[thm]{Conjecture}

%\numberwithin{equation}{section}

\newcommand{\Z}{{\mathbb Z}} %cph changed from \mathbf
\newcommand{\Q}{{\mathbb Q}}
\newcommand{\R}{{\mathbb R}}
\newcommand{\C}{{\mathbb C}}
\newcommand{\N}{{\mathbb N}}
\newcommand{\FF}{{\mathbb F}}
\newcommand{\fq}{\mathbb{F}_q}
\newcommand{\rmk}[1]{\footnote{{\bf Comment:} #1}}

\newcommand{\bfA}{{\boldsymbol{A}}}
\newcommand{\bfY}{{\boldsymbol{Y}}}
\newcommand{\bfX}{{\boldsymbol{X}}}
\newcommand{\bfZ}{{\boldsymbol{Z}}}
\newcommand{\bfa}{{\boldsymbol{a}}}
\newcommand{\bfy}{{\boldsymbol{y}}}
\newcommand{\bfx}{{\boldsymbol{x}}}
\newcommand{\bfz}{{\boldsymbol{z}}}
\newcommand{\F}{\mathcal{F}}
\newcommand{\Gal}{\mathrm{Gal}}
\newcommand{\Fr}{\mathrm{Fr}}
\newcommand{\Hom}{\mathrm{Hom}}
\newcommand{\GL}{\mathrm{GL}}

\renewcommand{\mod}{\;\operatorname{mod}}
\newcommand{\ord}{\operatorname{ord}}
\newcommand{\TT}{\mathbb{T}}
\renewcommand{\i}{{\mathrm{i}}}
\renewcommand{\d}{{\mathrm{d}}}
\renewcommand{\^}{\widehat}
\newcommand{\HH}{\mathbb H}
\newcommand{\Vol}{\operatorname{vol}}
\newcommand{\area}{\operatorname{area}}
\newcommand{\tr}{\operatorname{tr}}
\newcommand{\norm}{\mathcal N} % norm =(\frac{ n+\sqrt{n^2-4}} 2)^2
\newcommand{\intinf}{\int_{-\infty}^\infty}
\newcommand{\ave}[1]{\left\langle#1\right\rangle} %  average
\newcommand{\Var}{\operatorname{Var}}
\newcommand{\Prob}{\operatorname{Prob}}
\newcommand{\sym}{\operatorname{Sym}}
\newcommand{\disc}{\operatorname{disc}}
\newcommand{\CA}{{\mathcal C}_A}
\newcommand{\cond}{\operatorname{cond}} % conductor
\newcommand{\lcm}{\operatorname{lcm}}
\newcommand{\Kl}{\operatorname{Kl}} %Kloosterman sum
\newcommand{\leg}[2]{\left( \frac{#1}{#2} \right)}  % Legendre symbol
\newcommand{\Li}{\operatorname{Li}}

\newcommand{\sumstar}{\sideset \and^{*} \to \sum}
\newcommand{\prodstar}{{\sideset \and^{*} \to \prod}}

\newcommand{\LL}{\mathcal L} %L-function of u
\newcommand{\sumf}{\sum^\flat}
\newcommand{\Hgev}{\mathcal H_{2g+2,q}}
\newcommand{\USp}{\operatorname{USp}}
\newcommand{\conv}{*}
\newcommand{\dist} {\operatorname{dist}}
\newcommand{\CF}{c_0} % Fejer constant
\newcommand{\kerp}{\mathcal K}

\newcommand{\Cov}{\operatorname{cov}}
\newcommand{\Sym}{\operatorname{Sym}}

\newcommand{\ES}{\mathcal S} % sums over AP
\newcommand{\EN}{\mathcal N} % sum over short intervals
\newcommand{\EM}{\mathcal M} % sum over pols of deg n
\newcommand{\Sc}{\operatorname{Sc}} %Secular coefficients
\newcommand{\Ht}{\operatorname{Ht}}

\newcommand{\E}{\operatorname{E}} % expectation
\newcommand{\sign}{\operatorname{sign}} %sign

\newcommand{\divid}{d} % the divisor function

\newcommand{\gl}{L}

% for the appendix
\newcommand{\I}{\mathcal A}
\newcommand{\II}{\mathcal B}
\newcommand{\III}{\mathcal C}
\newcommand{\sI}{\Sigma_\mathcal A}
\newcommand{\sII}{\Sigma_\mathcal B}
\newcommand{\sIII}{\Sigma_\mathcal C}
%%%%%%%%%%%%%
\title[Locally repeated values of arithmetic functions]{On locally repeated values of arithmetic functions over $\fq[T]$ }

\author{Ze\'ev Rudnick}

\address{Raymond and Beverly Sackler School of Mathematical Sciences,
Tel Aviv University, Tel Aviv 69978, Israel}
%\email{rudnick@post.tau.ac.il}
 
\date{\today}
 \dedicatory{\rm \large with an Appendix by Ron Peled}

\begin{abstract}
The frequency of occurrence of ``locally repeated" values of arithmetic functions  is a common theme in analytic number theory, for instance in the  Erd\H{o}s-Mirsky problem on coincidences of the divisor function at consecutive integers, the analogous problem for the Euler totient function, and the quantitative conjectures of Erd\H{o}s, Pomerance and Sark\H{o}zy and of  Graham, Holt and Pomerance on the frequency of occurrences. 
In this paper we introduce  the corresponding problems in the setting of polynomials over a finite field, and completely solve them in the large finite field limit.

\end{abstract}

\maketitle

%\tableofcontents

\section{Introduction}

The frequency of occurrence of ``locally repeated" values of arithmetic functions  is a common theme in analytic number theory. For instance, the famous Erd\H{o}s-Mirsky problem asked to show that there are infinitely many integers $n$ for which $d(n)=d(n+1)$, where $d(n)$ is the divisor function; and the number of such occurrences was the subject of a series of papers of Erd\H{o}s,  Pomerance and Sark\H{o}zy. One can replace the divisor function $d(n)$ by the number $\omega(n)$ of distinct prime divisors of $n$; the problems turn out to be closely related. Replacing $d(n)$ by the Euler totient function $\varphi(n)$ leads to somewhat different problems.  These questions  have generated a large body of literature (some described below), with several open conjectures.  In this paper we introduce  the corresponding problems in the setting of polynomials over a finite field, and completely solve them in the large finite field limit.

\subsection{The problem of Erd\H{o}s and Mirsky} 
We begin with an account of the state of the art for the problems over the integers.

The problem of Erd\H{o}s and Mirsky \cite{EM} is to show that there are infinitely many  integers $n$ such that $\divid(n)=\divid(n+1)$ where $\divid(n)$ is the number of divisors of $n$. 
This was proved by Heath-Brown \cite{HB1984} following an idea in Spiro's Ph.D. thesis \cite{Spiro}, who showed that $\divid(n)=\divid(n+5040)$ has infinitely many solutions, and Pinner  \cite{Pinner} showed that for any $k\geq 1$, there are infinitely many integers $n$ with $\divid(n) = \divid(n+k)$.   We now know much more, for instance  Graham, Goldston, Pintz and Yildirim \cite{GGPY} show that there are infinitely many $n$'s so that both $n$ and $n+1$ have prime factorizations of the form $p_1^2p_2p_3p_4$ with $p_j$ distinct primes, hence $d(n)=24=d(n+1)$. 

The same problem arises for other arithmetic functions, such as $\Omega(n)$, the number of all prime divisors of $n$   \cite{HB1984}, or $\omega(n)$, the number of distinct prime divisors of $n$ \cite{SP}.  
One can also ask  about multiple shifts, for instance are there infinitely many solutions of 
$$d(n)=d(n+1)=d(n+2)$$
of which nothing is currently known.

 The quantitative aspect of the problem is to find the asymptotic of 
$$
S_\alpha(x):=\#\Big\{n\leq x: \alpha(n)=\alpha(n+1)\Big\}
$$
where $\alpha=d,\omega,\Omega$, and likewise for any shift.  
   Erd\H{o}s, Pomerance and Sark\H{o}zy  \cite{EPS2} conjectured that the order of magnitude is given by 
\begin{equation}\label{conj EPS}
S_\alpha(x) \sim \frac 1{2\sqrt{\pi}} \frac{x}{\sqrt{\log \log x}}
\end{equation}
 They proved an  upper bound  $S_\alpha(x)\ll  x/\sqrt{\log \log x}$  of the correct order of magnitude \cite{EPS3} 
%(also for $\omega$ and $\Omega$)  
and there is a lower bound which is not far from the upper bound $S_\alpha(x)\gg x/(\log\log x)^3 $, due to Hildebrand \cite{Hildebrand}. 
%, improving on  \cite{HB1984}: 

One can more generally ask for the asymptotic frequency of coincidences of any number of shifts, that is given any distinct integers $a_1,\dots, a_r$, for 
$$
S_\alpha(a_1\dots,a_r;x):=\#\Big\{n\leq x: \alpha(n+a_1)=\dots = \alpha(n+a_r)\Big\}
$$
and using the same heuristic as in \cite{EPS2}, namely that the shifts are statistically independent, combined with the Erd\H{o}s-Kac theorem, one is led to conjecture that 
\begin{equation}\label{conj EPSmultiple}
S_\alpha(a_1\dots,a_r;x) \sim \frac 1{\sqrt{r}(\sqrt{2\pi})^{r-1}} \frac{x}{ (\log \log x)^{(r-1)/2}}
\end{equation}

\subsection{Locally repeated value of the Euler totient function}
Given a nonzero integer $k\geq 1$, it was conjectured  in \cite{EPS2} that there are infinitely many integers $n$ for which $\varphi(n)=\varphi(n+k)$. This is not known for any value of $k$. 
Let $P(k,x)$ be the number of such integers  $n\leq x$: 
$$P(k,x):=\#\{n\leq x: \varphi(n)=\varphi(n+k)\}
\;.
$$    
For instance, when $x=10^8$, we have \cite{GHP} 
$$
P(1,10^8) = 306, \quad P(2,10^8) = 125986, \quad P(3,10^8)=2, \quad P(4,10^8)=69131
$$
In \cite{EPS2, GHP} it is shown that    $P(k,x)=o(x)$ for any $k\geq 1$. 

There is a significant difference between $k$ being even or odd, due to the ability to find solutions to the problem when $k$ is even:  
Leo Moser \cite{Moser} observed that for integers of the form $n=2(2p-1)$ where $p$ is a prime such that $2p-1$ is also prime, then $\varphi(n)=2p-2=\varphi(4p)=\varphi(n+2)$, and Schinzel \cite{Schinzel} extended this observation to the family $n=k(2p-1)$ where $p$ is a  prime for which $2p-1$ is also prime, and both are coprime to $k$. 
Therefore assuming a suitable quantitative version of the twin prime conjectures gives at least $\gg x/(\log x)^2$ solutions when $k$ is even. However, when $k$  is odd, we have a smaller upper bound 
$$P(k,x)\ll x/\exp\{(\log x)^{1/3}\},\qquad k \; {\rm odd}
\;.
$$
It is also conjectured \cite{EPS2} that for any $k\geq 1$,  there is a lower bound of $P(k,x)\gg x^{1-\epsilon}$.

Graham, Holt and Pomerance \cite{GHP} systematized these observations and used them to conjecture that for $k$ even,  
\begin{equation}\label{conj k even}
\frac 1x P(k,x)\sim  \frac{A(k)}{(\log x)^2} \quad {\rm as}\quad x\to \infty\;,
 \end{equation}
where $A(k)=2C_2 \cdot  c(k) $, with $C_2=\prod_{p>2}(1-(p-1)^{-2})=0.6601\dots$ is the twin prime constant, and 
$$ c(k) = \sum_{\substack{j,j+k\\{\rm   same \; prime \; divisors}
}}\frac{\gcd(j,j+k)}{j(j+k)} \sideset{}{^*}\prod_p \frac{p-1}{p-2}
$$
where the sum is over all $j$'s so that $j$ and $j+k$ have the same prime factors, and the product is over primes $p>2$ dividing $jk(j+k)/\gcd(j,j+k)^3$. For instance, $c(2)=1/2$. 

One can more generally ask the same question for multiple shifts, and to replace the Euler totient function $\varphi$ by the sum-of divisors function $\sigma(n) = \sum_{d\mid n} d$.

\subsection{The problem of  Erd\H{o}s and Mirsky over $\fq[T]$}

Let $\fq$ be a finite field of $q$ elements, and $\fq[T]$ the ring of polynomials with coefficients in $\fq$. For $n\geq 0$ let $M_n\subset \fq[T]$ be the set of monic polynomials of degree $n$. Let $\alpha$ be an arithmetic function, that is a complex-valued function on the set of monic polynomials. 
%which is  {\em even}:   $\alpha(cf) = \alpha(f)$, for $c\in \fq^\times$ and $0\neq f\in \fq[T]$.
 For each finite field $\fq$, we are given $r$ distinct polynomials $a_1,\dots, a_r\in \fq[T]$ of degree $<n$.  
 We want to compute the probability that $\alpha(f+a_1) = \dots = \alpha(f+a_r)$ for random $f\in M_n$, as $q\to \infty$. 
%, or equivalently the probability with the divisor function $\divid(f)$ replaced by $\omega(f)$, the number of distinct prime divisors of $f$.  
That is, setting
$$S_\alpha(\vec a;n,q):=\#\Big\{f\in M_n: \alpha(f+a_1) = \dots = \alpha(f+a_r) \Big\}$$
then 
$$ \Prob\Big\{ f\in M_n: \alpha(f+a_1) = \dots = \alpha(f+a_r)
 \Big\} = \frac 1{q^n} S_\alpha(\vec a;n,q)
\;.
$$

We  treat the case when the arithmetic function $\alpha$  is such that for squarefree $f$, 
the value $\alpha(f)$ depends univalently   on the number $\omega(f)$ of distinct prime (monic irreducible) divisors of $f$, that is for squarefree $f,g\in M_n$, $\alpha(f)=\alpha(g)$ if and only if $\omega(f)=\omega(g)$.  
Examples are:     $\Omega(f)$ the number of all prime divisors, $d(f)$ the number of monic divisors of $f$, and more generally $d_k(f)=\#\{(g_1,\dots, g_k): f=c g_1\dots \cdot g_k, \; g_j \;{\rm monic}, c\in \fq^\times\}$, the number of ways of factoring $f$ as a product of $k$ factors.

The result is  
  \begin{theorem}\label{thm: mult coinc}
Let $\alpha= \omega$, $\Omega$,  or $d_k$. For any  $r$ distinct polynomials $a_1,\dots, a_r\in \fq[T]$ of degree $<n$
 $$
\lim_{q\to\infty} \frac 1{q^n} S_\alpha(\vec a;n,q) \sim \frac {c_r }{( \log n)^{(r-1)/2}}, \quad n\to \infty,  
 $$
with 
$$c_r = \frac 1{(2\pi)^{(r-1)/2}\sqrt{r}} 
\;.$$ 
\end{theorem}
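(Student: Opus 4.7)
The plan is to (i) reduce to counting matches of $\omega$, (ii) establish independence of the $\omega(f+a_i)$ in the large-$q$ limit, and then (iii) evaluate via a local central limit theorem for cycle counts of random permutations. For (i), since $\alpha\in\{\omega,\Omega,d_k\}$ is univalent in $\omega$ on squarefree polynomials and, for uniform $f\in M_n$, the joint event that $f+a_1,\dots,f+a_r$ are all squarefree has probability $1-O_{n,r}(1/q)$, one obtains
$$\lim_{q\to\infty}\frac{1}{q^n}S_\alpha(\vec a;n,q) = \lim_{q\to\infty}\Prob_{f\in M_n}\bigl(\omega(f+a_1)=\cdots=\omega(f+a_r)\bigr).$$

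\textbf{Step 2: Large-$q$ independence via product monodromy.} Next I would show that for fixed distinct $a_1,\dots,a_r$ of degree less than $n$, the $r$-tuple of factorization types of $(f+a_1,\dots,f+a_r)$ converges in distribution, as $q\to\infty$, to $r$ independent copies of the cycle type of a uniform $\sigma\in S_n$. The geometric input is that the Galois group of the splitting field of $\prod_{i=1}^{r}\bigl(X^n+c_{n-1}X^{n-1}+\cdots+c_0+a_i(X)\bigr)$ over $\overline{\fq}(c_0,\dots,c_{n-1})$ equals $S_n^r$. Each individual factor has Galois group $S_n$ by the classical construction of the generic polynomial; linear disjointness of the $r$ splitting fields then reduces, via Goursat's lemma (using that the only nontrivial quotient of $S_n$ is $\Z/2$), to checking that the quadratic subfields $\overline{\fq}(c_\bullet)\bigl(\sqrt{\disc(f+a_i)}\bigr)$ are pairwise distinct, which follows from the distinctness of the $a_i$. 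A Chebotarev/Lang--Weil point count then transfers this into $\fq$-point statistics, yielding
$$\lim_{q\to\infty}\frac{1}{q^n}S_\alpha(\vec a;n,q) = \sum_{k=1}^{n} P_n(k)^r,\qquad P_n(k):=\Prob_{\sigma\in S_n}\bigl(\sigma\text{ has exactly }k\text{ cycles}\bigr).$$

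\textbf{Step 3: Local CLT and Gaussian integral.} By Feller's representation, the cycle count of a uniform $\sigma\in S_n$ is a sum of independent Bernoullis with parameters $1/j$, with mean $H_n=\log n+O(1)$ and variance $\log n+O(1)$. A classical saddle-point analysis of $n!\,P_n(k)=|s(n,k)|$ (Moser--Wyman) yields the local central limit theorem
$$P_n(k) = \frac{1}{\sqrt{2\pi\log n}}\exp\!\left(-\frac{(k-\log n)^2}{2\log n}\right)\bigl(1+o(1)\bigr)$$
uniformly in $|k-\log n|\leq (\log n)^{1/2+\epsilon}$, with Gaussian tails outside. Approximating $\sum_k P_n(k)^r$ by the Riemann sum for the integral of the $r$-th power of the corresponding Gaussian density gives
$$\sum_{k} P_n(k)^r \;\sim\; \frac{1}{\sqrt{r}\,(2\pi)^{(r-1)/2}\,(\log n)^{(r-1)/2}} \;=\; \frac{c_r}{(\log n)^{(r-1)/2}},$$
as desired.

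The crux is Step 2, the product-monodromy/independence input. That each individual splitting field has Galois group $S_n$ is classical, but the linear disjointness of the $r$ splitting fields is the substantive point and uses distinctness of the $a_i$ in an essential way. Step 1 is straightforward in the large-$q$ regime---it is precisely this obstruction that makes the problem harder over $\Z$, where the squarefree density is only $6/\pi^2$ and the combinatorics of multiplicity-bearing factorizations cannot be dismissed---and Step 3 is a standard Moser--Wyman asymptotic for Stirling numbers of the first kind.
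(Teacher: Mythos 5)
Your proposal has the same architecture as the paper's proof: reduce to matching $\omega$ on the squarefree locus, invoke large-$q$ independence of the cycle types of $f+a_1,\dots,f+a_r$ to get $\lim_q q^{-n}S_\alpha(\vec a;n,q)=\sum_k P_n(k)^r$, and then extract the $(\log n)^{-(r-1)/2}$ asymptotic by a local central limit theorem for the number of cycles. Two points of comparison. First, the independence statement in your Step 2 is exactly the cited Theorem 1.4 of Andrade--Bary-Soroker--Rudnick, which the paper uses as a black box; your monodromy sketch is the right idea but is not complete as written. Goursat's lemma for subgroups of $S_n\times S_n$ surjecting onto both factors allows the common quotient to be $S_n$ itself, not only $\Z/2$, so besides separating the quadratic subfields $\overline{\fq}(c_0,\dots,c_{n-1})(\sqrt{\disc(F+a_i)})$ you must also rule out two of the splitting fields coinciding outright (and for $r>2$ run the standard reduction from pairwise to full surjectivity). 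Moreover, the assertion that distinctness of the $a_i$ forces the discriminant square classes apart is precisely the nontrivial computation of that reference (which required a published correction), so it should be cited or carried out rather than waved through. Second, for the permutation asymptotics the paper works on the Fourier side: it writes $E_r(n)=(f_n\conv\cdots\conv f_n)(0)$ for the characteristic function $f_n(t)\sim e^{\log n(e^{it}-1)}/\Gamma(e^{it})$ and evaluates the $r$-fold self-convolution at $0$ by stationary phase, whereas you work on the physical side with the Moser--Wyman local CLT for $|s(n,k)|$ and a Riemann sum. These are equivalent routes to the same Gaussian integral and both produce $c_r=(2\pi)^{-(r-1)/2}r^{-1/2}$; your version needs the local CLT uniformly in the central range $|k-\log n|\le(\log n)^{1/2+\epsilon}$ together with a tail estimate such as
$$\sum_{|k-\log n|>(\log n)^{1/2+\epsilon}}P_n(k)^r\le\Big(\max_kP_n(k)\Big)^{r-1}\,\Prob\big(|\omega_n-\log n|>(\log n)^{1/2+\epsilon}\big),$$
which is standard. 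With the Step 2 input properly sourced or completed, the argument is correct.
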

 If we make the translation $X\leftrightarrow q^n$, $\log x \leftrightarrow n$ then we see that we have an analogue for the conjecture \eqref{conj EPS} of  Erd\H{o}s, Pomerance and Sark\H{o}zy (including the same constant).

We prove Theorem~\ref{thm: mult coinc} in two steps: 
For a permutation $\sigma\in S_n$ on $n$ letters, let $\omega_n$ be the number of (disjoint) cycles of $\sigma$. 
We set   
$$ E_r(n)=\frac 1{(n!)^r}\#\{(\sigma_1,\dots,\sigma_r)\in (S_n)^r : \omega_n(\sigma_1) = \dots =\omega_n(\sigma_r)\}  
$$
which is  the probability that $r$ random permutations on $n$ letters all have the same number of cycles.  
\begin{theorem}\label{EPSFqr}
%Let $\alpha=d$, $\omega$ or $\Omega$.  
For any $n\geq 1$, there is some $c_{r,n}>0$ so that for any choice of distinct $ a_1,\dots ,a_r\in \fq[T]$, with $\max_j\deg a_j<n$, 
$$ \Big| S_\omega(\vec a;n,q)- E_r(n) q^n\Big| \leq c_{r,n} q^{n-1/2}
%:=\sum_{\lambda\vdash n} p(\lambda)^2
\;.$$
\end{theorem}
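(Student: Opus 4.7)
The plan is to recast $S_\omega(\vec a;n,q)$ as a Chebotarev-type count for Frobenius conjugacy classes in $S_n^r$, to establish ``big monodromy,'' and to conclude via Deligne's equidistribution theorem. For a squarefree $g \in M_n$, the degree pattern of its monic irreducible factors is exactly the cycle type of geometric Frobenius acting on the $n$ roots of $g$ (after an ordering), so $\omega(g)$ equals the number of disjoint cycles. The $f\in M_n$ for which some $f+a_j$ is not squarefree contribute $O_{n,r}(q^{n-1})$ (lying in the zero locus of a non-vanishing discriminant polynomial) and are absorbed into the error term.

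Parametrize $M_n \cong \mathbb{A}^n$ by $\bfx=(x_0,\dots,x_{n-1})$ with $f(T) = T^n + \sum_{i<n} x_i T^i$, and let $U \subset \mathbb{A}^n$ be the open locus where $\prod_j (f+a_j)$ is squarefree and the $f+a_j$ are pairwise coprime. For each $j$ we have a finite étale $S_n$-torsor $\widetilde X_j \to U$ of ordered roots of $f+a_j$; form the product torsor $\widetilde X = \widetilde X_1 \times_U \cdots \times_U \widetilde X_r \to U$, whose geometric Galois group sits in $S_n^r$. The central claim is that it equals the full $S_n^r$. Each factor has geometric Galois group $S_n$ by the classical generic-polynomial fact, unaffected by the coordinate translation $f \mapsto f+a_j$. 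By Goursat's lemma, the remaining task is to show that the $r$ splitting fields $L_j$ over $K=\overline{\fq}(\bfx)$ avoid ``cross-linking'' through a common Galois subextension. For $n \geq 5$ the only nontrivial proper normal subgroup of $S_n$ is $A_n$, so this reduces to (a) showing $L_i \neq L_j$ for $i \neq j$, and (b) showing that $\prod_{j\in J}\disc(f+a_j)$ is not a square in $K$ for any nonempty $J$. Item (a) follows from the distinctness of the ramification divisors: the universal discriminant polynomial is irreducible, and its distinct shifts yield pairwise distinct irreducible polynomials in $\bfx$. Item (b) then follows because a product of distinct irreducibles is squarefree of positive degree, hence not a square in $\overline{\fq}(\bfx)$. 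The small cases $n \leq 4$ and characteristic two require separate treatment to handle additional normal subgroups (notably the Klein four in $S_4$) and the Artin--Schreier form of the discriminant.

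Granted the monodromy, Deligne's Riemann hypothesis applied to the corresponding $\ell$-adic sheaf on $U$ yields, for each tuple of conjugacy classes $(C_1,\dots,C_r)$ in $S_n^r$,
$$\#\bigl\{f\in U(\fq): \mathrm{Frob}_{f+a_j}\in C_j\text{ for all }j\bigr\} = \frac{\prod_j|C_j|}{(n!)^r}\,q^n + O_{n,r}(q^{n-1/2}).$$
Summing over tuples in which every $C_j$ has the same cycle-count collapses the main term to
$$S_\omega(\vec a;n,q) = \sum_{k=1}^n \frac{N_k^r}{(n!)^r}\,q^n + O_{n,r}(q^{n-1/2}) = E_r(n)\,q^n + O_{n,r}(q^{n-1/2}),$$
where $N_k = \#\{\sigma\in S_n : \omega_n(\sigma)=k\}$. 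The main obstacle is the big-monodromy step; once one has irreducibility of the universal discriminant and coprimality of its distinct translates in hand, the Goursat argument goes through uniformly for $n \geq 5$, with the small-$n$ and small-characteristic cases requiring separate but routine verification.
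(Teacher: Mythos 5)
Your proposal is correct in outline, but it is worth being clear about where it diverges from the paper: the paper does \emph{not} prove the equidistribution statement itself. It quotes it as a black box --- Theorem~\ref{cycle indep thm}, i.e.\ Theorem~1.4 of \cite{ABSR}, the independence of cycle structures of $f+a_1,\dots,f+a_r$ --- and then the entire proof of Theorem~\ref{EPSFqr} is the combinatorial bookkeeping you perform in your last display: sum over $r$-tuples of cycle types with a common number of parts, factor the probability, and recognize $\sum_k G_k(n)^r=E_r(n)$. What you have written is essentially a sketch of the proof of that cited input (product of $S_n$-torsors, Goursat, Deligne), which is precisely the method of \cite{ABSR}; so you are unpacking the reference rather than finding a new route. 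Two cautions on the unpacked part. First, ``distinct shifts yield pairwise distinct irreducible polynomials in $\bfx$'' is asserted but is a genuine lemma: you must show the discriminant hypersurface is not invariant under translation by $a_i-a_j$ for $a_i\neq a_j$ of degree $<n$. Second, the ``separate but routine'' small-characteristic cases are in fact the delicate point --- the published version of \cite{ABSR} required a correction precisely in low characteristic --- so they should not be waved off. If you instead cite \cite{ABSR} for the independence statement, your final summation coincides with the paper's proof and is complete; the only cosmetic difference is that you sum over conjugacy classes of $S_n^r$ while the paper sums over $r$-tuples of partitions of $n$, which is the same thing.
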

Our key tool for this is the independence of cycle structure for shifted polynomials \cite{ABSR}, see Theorem~\ref{cycle indep thm}. 

 We then show that:
%\begin{proposition}
   \begin{equation}\label{eq:Asymptotics of E_r(n)}
 E_r(n)\sim  \frac {c_r }{( \log n)^{(r-1)/2}}, \quad n\to \infty,\qquad c_r = \frac 1{(2\pi)^{(r-1)/2}\sqrt{r}} 
 \end{equation}
%\end{proposition}
(the case $r=2$ is due to Wilf \cite{Wilf}) which will prove Theorem~\ref{thm: mult coinc}.

\subsection{Locally repeated values of $\varphi$  in $\fq[T]$}

The Euler totient function for   $\fq[T]$ over the finite field $\fq$   is defined as $\varphi(f)=\#\Big(\fq[T]/(f)\Big)^\times$, the number of invertible residues modulo $f$. 
Fix $r\geq 2$. Given $r$ distinct polynomials $a_1[T], ,\dots, a_r[T]\in \fq[T]$, all of degree $\deg a_j< n$, let 
$$ S_\varphi(\vec a;n,q) = \#\{f\in M_n: \varphi(f+a_1) = \varphi(f+a_2)=\dots =\varphi(f+a_r)\}
\;.$$
% (where $M_n\subset \fq[T]$ is the set of monic polynomials of degree $n$). 
%Due to translation invariance, we may take $a_1=0$ if we wish. 

Given a permutation $\sigma\in S_n$, one says that the {\em cycle structure} of $\sigma$ is $(\lambda_1,\dots, \lambda_n)$ if $\sigma$ has $\lambda_i$ cycles of length $i$ 
(the notation $\lambda=(1^{\lambda_1}2^{\lambda_2}\dots n^{\lambda_n})$ is also used in the literature). 
Thus $n=\sum_i i\lambda_i$, and the number of cycles is $\omega(\sigma) = \sum_i \lambda_i$.  
Let $W_r(n)$ be the probability that $r$ random permutations on $n$ letters have the same cycle structure and 
 $$ A_r = \sum_{m=1}^\infty W_r(m) 
\;.
$$
 We have 
$$ A_2 = 4.2634, \quad A_3 = 2.59071\dots, \quad A_4 = 2.23647\dots $$

\begin{theorem}\label{eulerphi thm multiple}
For any choice of  distinct $a_1(T),\dots ,a_r(T)\in \fq[T]$, with $\deg a_j<n$,  
$$\lim_{q\to \infty} \frac 1{q^n}S_\varphi(\vec a;n,q) \sim \frac{A_r}{n^r},\quad {\rm as}\quad n\to \infty 
\;.$$
\end{theorem}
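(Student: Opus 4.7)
The strategy has three steps: reduce $\varphi$-equality to cycle-structure equality, apply the cycle-structure independence theorem for shifts (Theorem~\ref{cycle indep thm}), and prove the combinatorial asymptotic $W_r(n)\sim A_r/n^r$.

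\textbf{Reduction and independence.} For squarefree monic $f\in M_n$ with prime factors of degrees $d_1,\dots,d_k$, $\varphi(f)=\prod_i(q^{d_i}-1)$ depends on $f$ only through the cycle structure $\lambda(f)=(\lambda_1,\dots,\lambda_n)$ with $\lambda_j=\#\{i:d_i=j\}$. Viewed as a polynomial in $q$, $\prod_j(q^j-1)^{\lambda_j}$ has its coefficient of $q^{n-\ell}$ determined inductively by $\lambda_1,\dots,\lambda_\ell$, so distinct partitions of $n$ give distinct $\varphi$-values for all sufficiently large $q$ (depending on $n$). Since non-squarefree monic polynomials of degree $n$ have density $1/q$ in $M_n$, a union bound shows the set of $f$ for which some $f+a_i$ is non-squarefree has size $O_r(q^{n-1})$, negligible as $q\to\infty$. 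Combined with Theorem~\ref{cycle indep thm}, which gives joint independence of $\lambda(f+a_1),\dots,\lambda(f+a_r)$ in the $q\to\infty$ limit (each distributed as the cycle type of a uniform element of $S_n$), this yields
$$\lim_{q\to\infty}q^{-n}S_\varphi(\vec a;n,q)=W_r(n)=\sum_{\lambda\vdash n}\frac{1}{z_\lambda^r}.$$

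\textbf{The asymptotic $W_r(n)\sim A_r/n^r$.} Peel off the largest part of $\lambda\vdash n$, writing $\lambda$ as having largest part $n-m$ and remaining parts forming a partition $\mu\vdash m$. When $m<n/2$, the part $n-m$ strictly exceeds every part of $\mu$, giving $z_\lambda=(n-m)z_\mu$, so
$$\sum_{\substack{\lambda\vdash n\\\max\lambda>n/2}}\frac{1}{z_\lambda^r}=\sum_{m=0}^{\lfloor(n-1)/2\rfloor}\frac{W_r(m)}{(n-m)^r}.$$
Split at $m\leq\sqrt n$: the small-$m$ range gives $(1+O(n^{-1/2}))n^{-r}\sum_{m\leq\sqrt n}W_r(m)=(A_r+o(1))/n^r$, using convergence of $A_r=\sum_m W_r(m)$. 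The range $\sqrt n<m\leq n/2$, together with partitions having $\max\lambda\leq n/2$, is controlled by an a priori tail bound $W_r(m)\ll m^{-r}$ (bootstrapped from the same decomposition) and contributes at smaller order $o(n^{-r})$.

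\textbf{Main obstacle.} The combinatorial asymptotic with the correct constant $A_r$ is the heart of the proof. The trivial bound $W_r(m)\leq 1/m^{r-1}$ is too weak (especially for $r=2$), so establishing $W_r(m)\ll m^{-r}$ uniformly, and showing that ``balanced'' partitions with no dominant part contribute at a genuinely lower order, require care; this is presumably the content of the appendix by Ron Peled, possibly via singularity analysis of the generating function $F_r(z)=\sum_n W_r(n)z^n=\prod_{j\geq 1}\sum_{k\geq 0}z^{jk}/((k!)^r j^{rk})$ near $z=1$.
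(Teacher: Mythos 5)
Your overall architecture coincides with the paper's: first a fixed-$n$ statement identifying $\lim_q S_\varphi(\vec a;n,q)/q^n$ with $W_r(n)$ via squarefreeness, injectivity of $\lambda\mapsto\prod_j(q^j-1)^{\lambda_j}$ for $q$ large, and Theorem~\ref{cycle indep thm}; then the combinatorial asymptotic $W_r(n)\sim A_r/n^r$. The first half is correct and matches the paper's Theorem~\ref{thm GHP multiple} and Lemma~\ref{lem:distpols} (your coefficient-by-coefficient injectivity argument is a perfectly good substitute for the paper's logarithmic-derivative one). Your main-term computation for $W_r(n)$ --- peeling off a unique largest part $n-m>n/2$ to get $\sum_{m<n/2}W_r(m)/(n-m)^r$ and splitting at $m\le\sqrt n$ --- is exactly the paper's Lemma~\ref{lem:main term for sIII} and \eqref{asymp of sIII}.

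The genuine gap is where you claim the balanced partitions (largest part at most $n/2$) and the uniform bound $W_r(m)\ll m^{-r}$ are ``bootstrapped from the same decomposition.'' They cannot be. For a balanced partition, peeling one copy of the largest part $t$ gives only the inequality $p(\lambda)\le t^{-1}p(\tilde\lambda)$ with $\tilde\lambda\vdash n-t$, hence the bound $\sum_{1\le t\le n/2}t^{-r}W_r(n-t)$. That sum is dominated by its \emph{small}-$t$ terms: already $t=1$ contributes $W_r(n-1)$, which under the inductive hypothesis is of order $C_r/n^r$ --- the same order as the main term --- so you get neither $o(n^{-r})$ for the balanced piece nor a closing induction (the balanced piece alone is bounded only by $2^r\zeta(r)C_r/n^r$, which exceeds the target $C_r/n^r$). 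The missing idea, and the actual content of Peled's appendix, is a further split by the number of cycles: partitions with $\omega(\lambda)>L\approx 3\log_2 n$ cycles satisfy $p(\lambda)\le 2\cdot 2^{-\omega(\lambda)}$ and contribute at most $n^{-3(r-1)}=o(n^{-r})$, while partitions with $\omega(\lambda)\le L$ automatically have largest part $t\ge n/L$, so the peeling sum is restricted to $t\ge n/L$ and (with a cutoff $a\asymp n^{2/3}$ separating off the main term) is at most $C_r\,n\,(L/(na))^r=o(n^{-r})$. With that restriction the induction $W_r(n)\le C_r/n^r$ closes and the rest of your argument goes through. The paper's alternative route in \S\ref{sec:Flajolet} is singularity analysis of $\prod_k I_r(z^k/k^r)$ at $z=1$, as you guessed, but neither route is automatic, and as written your proof of the key estimate does not stand on its own.
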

This is in analogy with the conjecture \eqref{conj k even}, once we use  the dictionary $x\leftrightarrow q^n=X$, and $\log x\leftrightarrow n=\log_q X$. 
 
The same result holds if we replace $\varphi$ with any   arithmetic function $\alpha$  for which there is $q_n$ so that if $f,g\in M_n\subset \fq[T]$ are both squarefree, then for all $q>q_n$, $\alpha(f)=\alpha(g)$ is equivalent to $f$ and $g$ having the same cycle structure: $\lambda(f)=\lambda(g)$.
Examples are the sum-of-divisors function $\sigma(f)=\sum_{d\mid f}|d|$ (the sum over monic divisors), where $|d| = \#\fq[T]/(d) = q^{\deg d}$, or more generally $\sigma_s(f) = \sum_{d\mid f}|d|^s$.

We prove Theorem~\ref{eulerphi thm multiple} in two steps. First, we fix $n$, and show: 
\begin{theorem}\label{thm GHP multiple}
For any $n\geq 1$, there is some $C_{r,n}>0$ so that for any choice of distinct $ a_1,\dots ,a_r\in \fq[T]$, with $\max_j\deg a_j<n$, 
$$ \Big| S_\varphi(\vec a;n,q)- W_r(n) q^n\Big| \leq C_{r,n} q^{n-1/2}
\;.$$
\end{theorem}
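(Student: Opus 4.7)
My plan is to reduce the $\varphi$-equality condition to a cycle-structure condition on squarefree polynomials, and then apply the shifted cycle independence theorem (Theorem~\ref{cycle indep thm}) just as in the proof of Theorem~\ref{EPSFqr}, with the partition $\lambda(f+a_i)$ playing the role of $\omega(f+a_i)$.

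For squarefree $f\in M_n$ with cycle structure $\lambda=(\lambda_1,\dots,\lambda_n)$, one has $\varphi(f)=\prod_{i=1}^n(q^i-1)^{\lambda_i}=:P_\lambda(q)$. Factoring each $X^i-1$ into cyclotomic polynomials $\Phi_d$, the multiplicity of $\Phi_d$ in $P_\lambda(X)$ equals $\sum_{d\mid i}\lambda_i$, from which $\lambda$ can be recovered by Möbius inversion on the divisibility poset. Hence $\{P_\lambda(X):\lambda\vdash n\}$ is a finite collection of pairwise distinct polynomials in $\Z[X]$, so there exists $q_n$ such that the evaluations $\{P_\lambda(q):\lambda\vdash n\}$ are pairwise distinct for every $q>q_n$. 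For $q\le q_n$ the asserted bound is trivial after enlarging $C_{r,n}$ (since $|S_\varphi(\vec a;n,q)-W_r(n)q^n|\le q^n\le q_n^{1/2}q^{n-1/2}$), so we may assume $q>q_n$; then, for squarefree $f,g\in M_n$, $\varphi(f)=\varphi(g)\Leftrightarrow\lambda(f)=\lambda(g)$.

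Next, the set of $f\in M_n$ for which some $f+a_i$ fails to be squarefree has size at most $rq^{n-1}=O_{r,n}(q^{n-1/2})$ and is absorbed into the error. On the complementary set the condition $\varphi(f+a_1)=\dots=\varphi(f+a_r)$ becomes $\lambda(f+a_1)=\dots=\lambda(f+a_r)$. Applying Theorem~\ref{cycle indep thm} (the shifted cycle independence statement from \cite{ABSR}) with its Weil-type error, one obtains, for each partition $\lambda\vdash n$,
$$\#\{f\in M_n:\lambda(f+a_i)=\lambda\text{ for }i=1,\dots,r\}=q^n\!\left(\frac{N(\lambda)}{n!}\right)^{\!r}+O_{n,r}(q^{n-1/2}),$$
where $N(\lambda)=\#\{\sigma\in S_n:\sigma\text{ has cycle type }\lambda\}$, with implied constant independent of the distinct shifts $a_1,\dots,a_r$. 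Summing over the $p(n)$ partitions of $n$ and using $W_r(n)=\sum_{\lambda\vdash n}(N(\lambda)/n!)^r$ yields
$$S_\varphi(\vec a;n,q)=W_r(n)\,q^n+O_{n,r}(q^{n-1/2}),$$
which is the desired estimate.

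The main conceptual obstacle is the first step: identifying $\varphi$-equality with cycle-type equality on squarefree polynomials for $q$ large in terms of $n$. Once this is in place, the rest is an immediate application of the joint cycle-independence input, exactly parallel to how Theorem~\ref{EPSFqr} handles the $\omega$ case; the uniformity of the $O(q^{n-1/2})$ error in the tuple $(a_1,\dots,a_r)$ is inherited directly from the Deligne/Weil estimates underlying Theorem~\ref{cycle indep thm}.
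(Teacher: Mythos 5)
Your proof is correct and follows essentially the same route as the paper: reduce $\varphi$-equality on squarefree polynomials to equality of cycle types for $q$ large in terms of $n$, then apply the joint cycle-independence theorem partition by partition and sum. The only (minor) difference is in the injectivity lemma: you recover $\lambda$ from $\prod_i(q^i-1)^{\lambda_i}$ via cyclotomic multiplicities and M\"obius inversion, whereas the paper compares logarithmic derivatives of $\prod_j(1-z^j)^{\lambda_j}$ — two equivalent ways of inverting the same divisor-sum relation.
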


It remains to asymptotically evaluate $W_r(n)$. 
This was done by Flajolet et al \cite[\S 4.2]{FFGPP} for $r=2$ (see also \cite{BBW}). In \S~\ref{sec:Flajolet} we sketch an adaptation of their method for general $r$, and show:  
\begin{equation}\label{FFGPP gen}
W_r(n) \sim \frac{A_r}{n^r},\quad {\rm as}\; n\to \infty
\;.
\end{equation}
Thus we obtain Theorem~\ref{eulerphi thm multiple}. Appendix~\ref{sec:ron appendix}, by Ron Peled gives a completely different, self-contained, proof of \eqref {FFGPP gen}.  

\subsection{Acknowledgements}
We thank Julio Andrade and Ofir Gorodetsky for discussions on the subject of the paper.

The research of Z.R. was supported by the European Research Council under the European Union's Seventh
Framework Programme (FP7/2007-2013) / ERC grant agreement
n$^{\text{o}}$ 320755, and from the Israel Science Foundation (grant
No. 925/14). Research of R.P. supported by ISF grant 861/15 and by ERC starting grant 678520 (LocalOrder).

\section{Erd\H{o}s and Mirsky over $\fq[T]$}

\subsection{Background on polynomial arithmetic}

For a polynomial $f\in \fq[T]$ of positive degree $n=\deg f$, the cycle structure is $\lambda(f)=(\lambda_1,\dots,\lambda_n)$ if in the decomposition of $f$ into primes (monic irreducibles) $f=c\prod_j P_j$, $c\in \fq^\times$, there are exactly $\lambda_i$ primes of degree $i$. A simple extension of the Prime Polynomial Theorem states that given a partition $\lambda\vdash n$ (so that $\sum_i i\lambda_i=n$), the number of monic polynomials $f\in M_n$ with cycle structure equal to $\lambda$ is 
\begin{equation}\label{PPTcycle}
\#\{f\in M_n:\lambda(f)=\lambda\} = p(\lambda)q^n +O_n(q^{n-1})
\end{equation}
where $p(\lambda)$ is the probability that a random permutation on $n$ letters has cycle structure $\lambda$, which by Cauchy's formula is given by:
$$p(\lambda) = \prod_{j=1}^n \frac 1{j^{\lambda_j} \cdot \lambda_j!}$$

The number of squarefree $f\in M_n$ is $q^n(1-\frac 1q)$ for $n\geq 2$, and hence if $a_1,\dots, a_r\in \fq[T]$ all have degree less than $n$, then as  $q\to \infty$, for all but $O(q^{n-1})$ polynomials $f\in M_n$, all of  $f+a_1,\dots,f+a_r$ are squarefree. 

For $f\in \fq[T]$ of positive degree, let $\omega(f)$ be the number of distinct prime divisors of $f$. 
Let $\alpha$ be an   arithmetic function such that  $\alpha(f)$ depends only on $\omega(f)$ if $f$ is squarefree, and that the dependence is 1-to-1, that is  for squarefree $f$ and $g$, we have 
$\alpha(f)=\alpha(g)$ if and only if $\omega(f)=\omega(g)$. Examples are $\Omega(f)$, the number of all prime divisors, $d(f)$, the number of all (monic) divisors, and more generally $d_k(f)$, the number of all ways of writing $f$ (assumed monic) as a product of $k$ monic polynomials (so  $d=d_2$): 
$$d_k(f) = \#\{(a_1,\dots, a_k)\quad {\rm monic}: f= a_1\cdot \ldots \cdot a_k\}$$
which for squarefree $f$ is given by $d_k(f)=k^{\omega(f)}$. 

For such $\alpha$,  if all of $f+a_1, \dots f+a_r$ are squarefree (which happens for all but $O_{r}(q^{n-1})$ of the $f\in M_n$), then  
$$\alpha(f+a_1)=\dots = \alpha(f+a_r)\quad  \Leftrightarrow \quad \omega(f+a_1)=\dots = \omega(f+a_r)
$$
%So the example of the M\"obius function, which for $f$ squarefree satisfies $\mu(f) = (-1)^{\omega(f)}$ gives a different answer, 
%namely that the probability that $\mu(f+a_1)=\dots = \mu(f+a_r)$ is $1/2^r$. 
%Now for squarefree polynomials, $\Omega(f)=\omega(f)$ and $\divid(f) = 2^{\omega(f)}$. \marginpar{and $d_k(f)=k^{\omega(f)}$}
Thus for such $\alpha$, 
\begin{equation} 
S_\alpha(\vec a ,n,q)  =S_\omega(\vec a, n,q) + O(q^{n-1}) 
\end{equation}
and so in the sequel we may take $\alpha=\omega$.

Our fundamental tool going beyond \eqref{PPTcycle} is the independence of cycle structure 
for shifted polynomials \cite[Theorem 1.4]{ABSR}: 
\begin{theorem}\label{cycle indep thm}
For fixed positive integers $n$, $r$  and    partitions $\lambda^{(1)}\vdash n, \dots$, $\lambda^{(r)}\vdash n$, 
\begin{multline*}
\frac{1}{q^n}\#\{f\in M_n:  \lambda(f+a_1) =\lambda^{(1)}, \cdots, \lambda(f+a_s)=\lambda^{(r)} \}  
= p(\lambda^{(1)}) \cdots p(\lambda^{(r)})
\\
+O_{n,r}\left(q^{-\frac{1}{2}}\right),
\end{multline*}
uniformly for all  distinct polynomials $a_1, \ldots, a_r\in \fq[t]$ of degrees
$\deg(a_i)<n$, as $q\to \infty$.
\end{theorem}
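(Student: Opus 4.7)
The plan is to reduce Theorem~\ref{cycle indep thm} to a computation of a Galois (monodromy) group of a family over the coefficient parameter space, and then to apply an equidistribution statement of Chebotarev/Deligne type. Parameterize monic polynomials of degree $n$ by their coefficients, giving $V=\mathbb{A}^n_{\fq}$; let $K=\fq(V)$ denote its function field. For the fixed tuple $a_1,\dots,a_r$, the polynomials $f+a_j$ specialize, over a closed point $c\in V(\fq)$, to $r$ polynomials in $\fq[T]$, and the cycle structure $\lambda(f+a_j)$ equals the cycle type of Frobenius acting on the roots of $f+a_j$ in the splitting field $L_j\supset K$. Over the complement $V^\circ$ of the union of the discriminant loci of the $f+a_j$, the roots form an étale cover of $V^\circ$ whose geometric Galois group $G_{\mathrm{geom}}$ embeds in $(S_n)^r$.

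Granting that $G_{\mathrm{geom}}=(S_n)^r$, the function-field Chebotarev density theorem (in the form of Deligne's equidistribution theorem applied to the lisse sheaf attached to the cover) gives that for any tuple of conjugacy classes in $(S_n)^r$, the fraction of $c\in V^\circ(\fq)$ whose Frobenius lies in that tuple equals $|C_1|\cdots|C_r|/(n!)^r$ up to $O_{n,r}(q^{-1/2})$, the error coming from Weil-type bounds for trace functions of pure sheaves. Summing over all tuples of conjugacy classes compatible with the prescribed cycle types, and using Cauchy's formula $p(\lambda)=\prod_j 1/(j^{\lambda_j}\lambda_j!)$, yields precisely the main term $p(\lambda^{(1)})\cdots p(\lambda^{(r)})$ with the claimed error. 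The contribution from $V\setminus V^\circ$ is $O_n(q^{n-1})$ and is absorbed into the error.

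The crux is computing $G_{\mathrm{geom}}$. Each individual factor $\Gal(L_j/K)=S_n$ is classical: the map $f\mapsto f+a_j$ is an $\fq$-automorphism of $V$, so the generic Galois group over $K$ is the same as that of the universal monic degree $n$ polynomial, namely $S_n$. The harder step is to show that $L_1,\dots,L_r$ are mutually linearly disjoint over $K$. I would use Goursat's lemma together with the simplicity of $A_n$ for $n\ge 5$, which reduces the task to excluding, for each pair $i\ne j$, a common subfield of $L_i$ and $L_j$ corresponding to an $S_n$- or $\mathbb{Z}/2\mathbb{Z}$-quotient. The natural mechanism is to exhibit, near a branch point $c_0$ of $L_i$ (a locus where $f+a_i$ acquires a double root), that $f+a_j$ remains squarefree because $a_j-a_i\not\equiv 0$, so the local inertia at $c_0$ is a transposition in the $i$-th factor and trivial in the $j$-th, preventing any nontrivial diagonal subgroup. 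The $\mathbb{Z}/2$ quotient (the ``sign'' character, detected by the discriminant) requires separate checking: the discriminants of $f+a_i$ and $f+a_j$, viewed as polynomials in the coefficients of $f$, are independent modulo squares because the $a_j$ are distinct.

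The main obstacle is precisely this linear disjointness of the splitting fields; once $G_{\mathrm{geom}}=(S_n)^r$ is established, the equidistribution and the error bound are standard. Small $n$ (where exceptional isomorphisms such as $A_4\not\simeq A_5$ behavior appear) would have to be handled separately, but the branch-point/inertia argument is uniform in $a_1,\dots,a_r$ since the relevant discriminants are nonzero rational functions on $V$ whose vanishing loci differ as soon as the $a_j$ are distinct. The uniformity in the $a_j$ in the error term follows because the implied constant depends only on the generic rank of the sheaf and the dimensions of the relevant cohomology groups, which are bounded purely in terms of $n$ and $r$.
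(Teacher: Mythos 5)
This theorem is not proved in the paper at all: it is quoted verbatim as Theorem~1.4 of the reference [ABSR] and used as a black box, so there is no internal proof to compare against. Your sketch is, in outline, the same strategy as the proof given in that reference: translate the count into a Frobenius-conjugacy-class count for the \'etale cover of the coefficient space $\mathbb{A}^n$ obtained by adjoining the roots of $f+a_1,\dots,f+a_r$, show that the (geometric) monodromy group is the full product $(S_n)^r$ by proving the splitting fields are linearly disjoint, and then invoke an explicit Chebotarev/Lang--Weil estimate, whose error term $O_{n,r}(q^{n-1/2})$ depends only on degree data and is therefore uniform in the $a_j$. As a sketch this is essentially right, and the mechanism you identify for linear disjointness --- that the discriminant hypersurface of $f+a_i$ is a translate of that of $f+a_j$ by $a_j-a_i\neq 0$, hence distinct, so inertia over its generic point acts as a transposition in the $i$-th coordinate and trivially in the others --- is exactly the right input.

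Two remarks. First, your route through Goursat's lemma, simplicity of $A_n$ for $n\ge 5$, and a separate analysis of the sign characters is heavier than needed and creates the small-$n$ case distinctions you worry about: once the group $G\le (S_n)^r$ surjects onto each factor and contains, for each $i$, an element that is a transposition in coordinate $i$ and the identity elsewhere, conjugation inside $G$ already produces all such transpositions (they form one conjugacy class in the $i$-th factor), and these generate $S_n\times 1\times\cdots\times 1$; doing this for each $i$ gives $G=(S_n)^r$ directly, for all $n$, with no discriminant-mod-squares argument. Second, two points you pass over would need attention in a complete write-up: in characteristic $2$ the inertia above the discriminant locus can be wild and the discriminant itself degenerates, so the ``transposition'' claim needs a separate justification there; and one must check that the arithmetic and geometric monodromy groups agree (the splitting field of the generic polynomial is regular over $\fq$, being a rational function field in the roots), since otherwise the equidistribution statement would involve a nontrivial Frobenius coset. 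Neither is a fatal obstacle, but both are genuine steps rather than formalities.
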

%This is a very strong form of twin-prime type conjectures.  

\subsection{Proof of Theorem~\ref{EPSFqr}} 
 For a permutation $\sigma\in S_n$ on $n$ letters, let $\omega_n$ be the number of cycles of $\sigma$, and  $G_k(n)$ be the probability that a permutation on $n$ letters has $k$ cycles:
$$G_k(n) = \Prob(\omega_n(\sigma)=k )=\frac 1{n!}\#\{\sigma\in S_n: \omega_n(\sigma) = k\}
\;.
$$
Then
$$ E_r(n)  = \sum_{k=1}^n G_k(n)^r 
\;.
$$

Note that $\omega(f)$ may be written in terms of the cycle structure  
$\lambda(f)=(\lambda_1,\dots, \lambda_n)$ of $f$ as $\omega(f) = \omega_n(\lambda(f))=\sum_{j=1}^n \lambda_j$, the number of parts of $\lambda(f)$ (we had earlier used $\omega_n(\sigma)$ for the number of cycles in a permutation $\sigma$). 
Thus 
\begin{equation*}
\begin{split}
\Prob\Big\{&
 f\in M_n: \omega(f+a_1) = \dots = \omega(f+a_r)
 \Big\}  
\\ &= \sum_{k=1}^n \Prob\Big\{f\in M_n: \omega(f+a_1) = \dots = \omega(f+a_r)=k \Big\}
\\ 
& =\sum_{k=1}^n \sum_{\substack{\lambda^{(1)},\dots,\lambda^{(r)}\vdash n\\ \omega_n(\lambda^{(i)} )= k}}
\Prob \Big\{ f\in M_n: \lambda(f+a_1) =\lambda^{(1)},  \dots \lambda(f+a_r)=\lambda^{(r)} \Big\} 
\end{split}
\end{equation*}
where the inner sum is over all $r$-tuples of partitions of $n$ with the same number of parts: $\omega_n(\lambda^{(j)})=k $.

Using independence of cycle structures of $f+a_1, \dots, f+a_r$ (Theorem~\ref{cycle indep thm}), we obtain 
\begin{multline*}
\Prob\Big\{ f\in M_n: \lambda(f+a_1) =\lambda^{(1)},  \dots \lambda(f+a_r)=\lambda^{(r)} \Big\} 
\\
= \prod_{i=1}^r \Prob\{f\in M_n: \lambda(f)=\lambda^{(i)}\} +O_{n,r}(q^{-1/2})
\end{multline*}
and hence
\begin{equation*}
\begin{split}
\Prob\Big\{   
 f\in M_n &:     
\omega(f+a_1) = \dots = \omega(f+a_r)
 \Big\}   
\\ &= \sum_{k=1}^n \sum_{\substack{\lambda^{(1)},\dots,\lambda^{(r)}\vdash n\\ \omega_n(\lambda^{(i)} )= k}}
\prod_{i=1}^r \Prob\{f\in M_n: \lambda(f)=\lambda^{(i)}\} +O_{n,r}(q^{-1/2})
\\
&= \sum_{k=1}^n \Big(\sum_{\substack{\lambda\vdash n\\ \omega_n(\lambda)=k}}\Prob\{f\in M_n: \lambda(f)=\lambda\}
\Big)^r+O_{n,r}(q^{-1/2})
\\ 
&=
\sum_{k=1}^n \Big(\Prob\{f\in M_n: \omega(f)=k  \} \Big)^r+O_{n,r}(q^{-1/2})
\end{split}
\end{equation*}
as $q\to \infty$.

We know that the cycle structure of polynomials of degree $n$ in $\fq[T]$ is modeled by that of random permutations on $n$ letters \eqref{PPTcycle}: 
$$
  \Prob\{f\in M_n: \lambda(f)=\lambda\} = \Prob(\omega_n(\sigma)=k)  +O_n(q^{-1})
$$
and plugging that in will give 
\begin{multline*}
\Prob\Big\{ f\in M_n: \omega(f+a_1) = \dots = \omega(f+a_r)
 \Big\} 
\\
    =   \sum_{k=1}^n G_k(n)^r+O_{n,r}(q^{-1/2})=E_r(n) +O_{n,r}(q^{-1/2})
\end{multline*}
which proves Theorem~\ref{EPSFqr}. \qed

 \section{Coincidences of shifted values of $\varphi$ over $\fq[T]$}

 \subsection{Proof of Theorem~\ref{thm GHP multiple}}
We notice that if $f$ is squarefree, then $\varphi(f)$ only depends on $q$ and on the cycle type of $f$: If $f=\prod P_i$ is a product of distinct primes, with cycle type $\lambda(f) = (\lambda_1,\dots,\lambda_n)$, meaning that it is divisible by exactly $\lambda_j$ primes of degree $j$, so that $\sum_{j=1}^n j\lambda_j=n=\deg f$, then since $\varphi(f) = |f|\prod_i(1-\frac 1{|P_i|})$,  
it follows that 
$$\varphi(f) =   q^n\prod_{j=1}^n(1-\frac 1{q^j})^{\lambda_j}
\;. $$
We define a function $\Phi(\lambda;z)$ on partitions $\lambda\vdash n$   by the above recipe, namely 
$$\Phi(\lambda;z)= \prod_{j=1}^n(1-z^j )^{\lambda_j}$$
so that 
$$\varphi(f) = |f|\Phi\Big(\lambda(f);1/q\Big)
\;.$$
%Note that $\Phi(\lambda;z)$ are polynomials of degree $n$, with constant term $1$. 

Likewise, for the sum-of-divisors function $\sigma(f) = \sum_{d\mid f} |d|$, if $f=\prod P_i$ is a product of distinct primes, with cycle type $\lambda(f) = (\lambda_1,\dots,\lambda_n)$, then 
$$\sigma(f)=\prod_{P\mid f}(|P|+1) = |f| \prod_{i=1}^n (1+\frac 1{q^i})^{\lambda_i} =|f|\Sigma(\lambda(f);1/q) 
% WRONG! \prod_{i=1}^n (\frac{1-\frac 1{q^{i+1}}}{1-\frac 1q})^{\lambda_i}=|f|S(\lambda(f);1/q)
$$
where  for a partition $\lambda\vdash n$, we set 
$$ \Sigma(\lambda;z) := \prod_{i=1}^n (1+z^i)^{\lambda_i}
\;.
$$

Both $\Phi(\lambda;z)$ and $\Sigma(\lambda;z)$ are  polynomials with integer coefficients,  with constant term $1$,  with all  zeros being roots of unity.

 \begin{lemma}\label{lem:distpols}
 If $\lambda\neq \lambda'$ are distinct partitions of $n$, then 
 
 i) the polynomials $\Phi(\lambda;z)$ and $\Phi(\lambda';z)$ are distinct.
 
 ii) There is some $\epsilon_n>0$ so that for all $0<|z|<\epsilon_n$, 
 $$\Phi(\lambda;z)\neq \Phi(\lambda';z)
 \;.$$ 

The same conclusions hold for $\Sigma(\lambda;z)$. 
 \end{lemma}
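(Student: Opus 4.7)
My plan is to prove (i) by showing that a partition $\lambda\vdash n$ can be recovered uniquely from the polynomial $\Phi(\lambda;z)$ (and similarly from $\Sigma(\lambda;z)$); part (ii) will then follow from an elementary finite-root argument, since there are only finitely many pairs of distinct partitions of $n$.

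For $\Phi(\lambda;z)=\prod_{j=1}^n(1-z^j)^{\lambda_j}$, I factor over $\C$. Each primitive $d$-th root of unity $\zeta_d$ is a simple zero of $1-z^j$ precisely when $d\mid j$, so its multiplicity as a zero of $\Phi(\lambda;z)$ is
\[
M_d(\lambda)\;=\;\sum_{\substack{k\ge 1\\ dk\le n}}\lambda_{dk}.
\]
Moebius inversion over the positive integers under divisibility (checking that $\sum_{k\ge 1}\mu(k)M_{dk}$ telescopes to $\lambda_d$) recovers each $\lambda_d$ from the multiset of zeros of $\Phi(\lambda;z)$, and hence $\lambda$ from the polynomial.

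For $\Sigma(\lambda;z)=\prod_{i=1}^n(1+z^i)^{\lambda_i}$ the zero structure is less transparent (the roots of $1+z^i$ involve the $2$-adic valuation of $i$), so instead I work with the logarithmic Taylor expansion. From $\log(1+z^i)=\sum_{k\ge 1}\frac{(-1)^{k+1}}{k}z^{ik}$, regrouping by $m=ik$ yields
\[
\log\Sigma(\lambda;z)\;=\;\sum_{m\ge 1}\frac{z^m}{m}\,t_m(\lambda),\qquad t_m(\lambda)\;=\;\sum_{\substack{i\mid m\\ i\le n}}(-1)^{m/i+1}\,i\lambda_i.
\]
Writing $a_i:=i\lambda_i$ (with $a_i:=0$ for $i>n$) and $\epsilon(k):=(-1)^{k+1}$, the inner sum is the Dirichlet convolution $(a*\epsilon)(m)$. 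Since $\epsilon(1)=1$, the function $\epsilon$ is a unit in the Dirichlet ring, so $a$---and hence $\lambda$---is recoverable from the $t_m$'s, which are determined by the formal power series $\log\Sigma(\lambda;z)$, which is in turn determined by $\Sigma(\lambda;z)$.

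For (ii), (i) gives that the difference $\Phi(\lambda;z)-\Phi(\lambda';z)$ is a nonzero polynomial for each pair $\lambda\ne\lambda'$ of partitions of $n$, with $\Phi(\lambda;0)=\Phi(\lambda';0)=1$, so it has only finitely many nonzero complex zeros; taking $\epsilon_n$ to be positive and strictly smaller than the modulus of every such zero, as $(\lambda,\lambda')$ ranges over the finite collection of pairs of partitions of $n$, gives the required $\epsilon_n$, and the same argument works for $\Sigma$. The only substantive step is (i); the main subtlety is the $\Sigma$ case, where a direct multiplicity count at roots of unity (as for $\Phi$) would be awkward, but the logarithmic/Dirichlet-inversion route sidesteps this cleanly.
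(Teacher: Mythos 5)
Your proof is correct. Part (ii) and the treatment of $\Sigma$ follow essentially the same route as the paper: the paper extracts the divisor sums $\sum_{i\mid m} i\lambda_i$ from the logarithmic derivative $-zA'/A$ and inverts them by induction on $m$ (the triangular-system version of your Dirichlet/M\"obius inversion), and it disposes of (ii) by exactly your finite-nonzero-roots argument applied to the difference polynomial. Where you genuinely diverge is in part (i) for $\Phi$: instead of logarithmic derivatives you read off, for each $d$, the multiplicity $M_d(\lambda)=\sum_{dk\le n}\lambda_{dk}$ of a primitive $d$-th root of unity as a zero of $\Phi(\lambda;z)$, and recover $\lambda$ by M\"obius inversion over divisibility. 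This is a clean alternative that exploits the explicit cyclotomic factorization of $1-z^j$; the paper's log-derivative computation is marginally more uniform in that the identical calculation handles both $\Phi$ and $\Sigma$ (only the signs $(-1)^{m/i+1}$ change), which is why the paper can say the $\Sigma$ case is ``similar'' in one line, whereas you correctly note that the multiplicity count at roots of unity becomes awkward for $1+z^i$ and switch methods. A minor bonus of your write-up is that you actually carry out the $\Sigma$ argument, including the observation that $\epsilon(k)=(-1)^{k+1}$ is a unit for Dirichlet convolution, which the paper leaves implicit.
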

\begin{proof}
 i) 
If $A(z)=\prod_{j=1}^n(1-z^j)^{a_j}$ and $B(z)=\prod_{j=1}^n(1-z^j)^{b_j}$ with non-negative integers $a_j,b_j$ and $A(z)=B(z)$ as polynomials, we want to show that $a_j=b_j$ for all $j$. 
  We compare logarithmic derivatives (we set $a_i=0=b_i$ if $i>n$):
 $$ -z\frac{A'}{A}(z) = \sum_{m\geq 1} z^m\sum_{i\mid m}ia_i$$
 and likewise for $B$. Therefore for all $m\geq 1$:
 \begin{equation}\label{sum formula}
  \sum_{i\mid m} ia_i = \sum_{i\mid m}ib_i\;.
  \end{equation}
 In particular taking $m=1$ gives $a_1=b_1$. Now we assume by induction that $a_i=b_i$ for $i<I$, then \eqref{sum formula} for $m=I$ gives
 $$ Ia_I + \sum_{\substack{i\mid I\\i<I}} = Ib_I + \sum_{\substack{i\mid I\\i<I}} ib_i $$
 and the inductive hypothesis gives $a_I=b_I$.  
The proof for $S(\lambda;z)$ is similar. 
%We proceed by induction on $n$, the case $n=1$ being trivial. Let $i_{\max} = \max(i:\lambda_i\neq 0)$ and likewise $i_{\max}'= \max(i:\lambda_i'\neq 0)$. Let $N = i_{\max}$. Then the primitive $N$-th root of unity $\zeta_N = e^{2\pi i/N}$ is a zero of $\Phi(\lambda;z)$ with multiplicity $\lambda_{i\max}$, 
%and no primitive root of unity $\zeta_r$ with $m>N$ is a zero of $\Phi(\lambda;z)$. Hence if $\Phi(\lambda;z) \equiv \Phi(\lambda;z)$ then necessarily $i_{\max} = i_{\max}'$ and $\lambda_{i_{\max}} = \lambda_{i_{\max}}'$. 
%Now look at the truncated partitions $\hat \lambda=(\lambda_1,\dots, \lambda_{i_{\max}-1},0,\dots,0)$ and $\hat \lambda'=(\lambda'_1,\dots, \lambda'_{i_{\max}-1},0,\dots,0)$, which are both partitions of $n-\lambda_{i_{\max}}$. 
%By construction, we have 
%$$ \Phi(\hat \lambda,z)\equiv \Phi(\hat \lambda';z)$$
%and therefore by induction $\hat \lambda=\hat \lambda'$, and hence $\lambda=\lambda'$.\end{comment}

  ii)  By part (i), the difference polynomial $F_{\lambda,\lambda''}(z):=\Phi(\lambda;z)-\Phi(\lambda';z)$ is not the zero polynomial if $n>1$. It is a polynomial of degree $\leq n-1$, which vanishes at the origin, since the original polynomials have the same constant term (equal to $1$). Its other zeros are bounded away from the origin, hence part (ii).  
 \end{proof}

  Given distinct $a_1, \dots,  a_r\in \fq[T]$ with $\deg a_j<n$, we  define for an $r$-tuple $\vec \lambda =(\lambda^{(1)},\dots \lambda^{(r)})$ of partitions $\lambda^{(j)}\vdash n$, a function 
%\marginpar{also ask $f+a_j$ all squarefree?}
 \begin{equation*}
 R(\vec \lambda;n,q;\vec a):=\#\left\{ 
\begin{aligned}
f\in M_n: \quad  &\lambda(f+a_1)=\lambda^{(1)}, \dots , \lambda(f+a_r)=\lambda^{(r)} 
\\ 
&\quad f+a_1,\dots f+a_r \quad {\rm all \; squarefree}
\end{aligned}
\right\}
 \;. 
\end{equation*}
 Then 
%\marginpar{First reduce to squarefrees?}
 \begin{equation}\label{R in terms of partitions}
 %\begin{split}
 S_\varphi(\vec a;n,q) 
%&= \sum_{\substack{\vec \lambda\\ \Phi(\lambda^{(1)};1/q) = \dots =\Phi(\lambda^{(m)};1/q)}} \#\{f\in M_n:\lambda(f+a_1)=\lambda^{(1)},\dots, \lambda(f+a_r)=\lambda^{(m)} \}\\ & 
= \sum_{\substack{\vec \lambda\\ \Phi(\lambda^{(1)};1/q) = \dots =\Phi(\lambda^{(r)};1/q)}} R(\vec \lambda;n,q;\vec a)
 + O(q^{n-1})
%\end{split}
\end{equation}
 where the sum is finite, as there are a finite number (depending on $n$) of partitions $\lambda\vdash n$. 
 
 By Lemma~\ref{lem:distpols}(ii), there is some $q_n\gg 1$ so that for all $q>q_n$, if $\lambda\neq \lambda'\vdash n$ then $\Phi(\lambda;1/q) \neq  \Phi(\lambda';1/q)$. Hence for $q>q_n$, we have that the only contribution to the outer sum in \eqref{R in terms of partitions} is the diagonal term $\lambda^{(1)}=\dots = \lambda^{(r)}$:
 $$
 S_\varphi(\vec a;n,q)=\sum_{\lambda\vdash n} R((\lambda,\dots,\lambda);n,q;\vec a)+ O(q^{n-1})
 \;.$$
 
Now we use independence of cycle structures (Theorem~\ref{cycle indep thm}) to write 
 \begin{equation*} 
 \begin{split}
  R((\lambda,\dots,\lambda);n,q;\vec a) &=\#\left\{
\begin{aligned}
f\in M_n:& \quad \lambda(f+a_1)=\lambda,\dots,  \lambda(f+a_r)=\lambda
\\ & \quad  f+a_1,\dots ,f+a_r \quad {\rm squarefree}
\end{aligned}
\right \} 
  \\
  &=q^n \Big(\frac{\#\{f\in M_n: \lambda(f)=\lambda\}}{q^n}  \Big)^r
+O(q^{n-1/2})
\end{split}
\end{equation*}
%\marginpar{check remainder} 
 (uniformly in $\vec a$). By \eqref{PPTcycle}, 
 $$ 
\#\{f\in M_n: \lambda(f)=\lambda\} = p(\lambda)q^n + O_n(q^{n-1})
 $$
 where $p(\lambda)$ is the probability that a random permutation on $n$ letters has cycle structure $\lambda$. 
% $$p(\lambda) = \frac 1{n!}\#\{\sigma\in S_n:\lambda(\sigma)=\lambda\} =\Prob_{S_n}(\lambda(\sigma)=\lambda)\;.$$
  Hence we find that uniformly in $\vec a$,
 $$ 
  S_\varphi(\vec a;n,q) =q^n\sum_{\lambda\vdash n} p(\lambda)^r +O(q^{n-1/2})
 \;.$$
 
 Now note that 
 $$\sum_{\lambda\vdash n} p(\lambda)^r=\Prob\Big((\sigma_1,\dots,\sigma_r)\in (S_n)^r : \lambda(\sigma_1)=\dots = \lambda(\sigma_r)\Big)=:W_r(n)
 $$
is the probability that an $r$-tuple of random permutations in $S_n$ have the same cycle structure,  that is 
 $$  S_\varphi(\vec a;n,q)  =W_r(n) q^n +O(q^{n-1/2})
 $$
(uniformly in $\vec a$). This proves Theorem~\ref{thm GHP multiple}. The case of the sum-of-divisors function is identical.\qed

\subsection{Discussion} 
The crux of the argument is that we are given an  arithmetic function $\alpha$  for which, there is $q_n>1$ so that if $f,g\in M_n$ are both squarefree, then $\alpha(f)=\alpha(g)$ is equivalent to $f$ and $g$ have the same cycle structure:
$$
\alpha(f)=\alpha(g)\leftrightarrow \lambda(f) = \lambda(g), \quad \forall f,g, \in M_n\quad {\rm squarefree},\quad \forall q>q_n
\;.
$$

More generally, consider an  arithmetic function $\alpha$ 
such that for squarefree $f,g\in M_n$,  satisfies: For $q>q_n$,  
$\alpha(f)=\alpha(g)$ if and only if $f$ and $g$ have the same cycle structure: $\lambda(f)=\lambda(g)$. 
Examples are $\varphi$, $\sigma$, and more generally $\sigma_s(f) = \sum_{d\mid f} |d|^s$. 

For such $\alpha$,  given $a_1,\dots, a_r\in \fq[T]$ all of degree less than $n$, for all $f\in M_n$ such that $f+a_1\dots, f+a_r$ are all squarefree, we have 
\begin{equation*}
\alpha(f+a_1)=\dots = \alpha(f+a_r) \; \Leftrightarrow\;  \lambda(f+a_1)=\dots = \lambda(f+a_r) ,\quad \forall q>q_n
\end{equation*}
and therefore
$$
S_\alpha(\vec a;n,q) = S_\varphi(\vec a;n,q) + O_{n,r}(q^{n-1})
$$
which makes the argument go through.

\section{Random permutation theory}

In this section we will prove \eqref{eq:Asymptotics of E_r(n)} and \eqref{FFGPP gen}. For both, the case $r=2$ is known and we verify that similar arguments work in general.

 \subsection{Random permutations with the same number of cycles}
%: Asymptotics of $E_r(n)$

\begin{proposition}\label{prop:Asymptotics of E_r(n)}
 \begin{equation*}
 E_r(n)\sim  \frac {c_r }{( \log n)^{(r-1)/2}}, \quad n\to \infty,\qquad c_r = \frac 1{(2\pi)^{(r-1)/2}\sqrt{r}} 
 \;.
\end{equation*}
 \end{proposition}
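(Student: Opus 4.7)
The idea is to use the local central limit theorem for $\omega_n(\sigma)$, the number of cycles of a uniform random permutation, and then identify the sum as a Gaussian integral. Recall that $E_r(n) = \sum_{k=1}^n G_k(n)^r$ with $G_k(n) = \Prob(\omega_n(\sigma)=k) = |s(n,k)|/n!$. It is classical (Goncharov; Flajolet--Sedgewick singularity analysis; Hwang's quasi-powers theorem; Arratia--Tavaré) that $\omega_n$ has mean and variance both asymptotic to $\log n$ and satisfies a local CLT: uniformly for $k$ with $|k-\log n| \le (\log n)^{2/3}$,
$$G_k(n) = \frac{1}{\sqrt{2\pi \log n}}\exp\!\left(-\frac{(k-\log n)^2}{2\log n}\right)(1+o(1)).$$
This can be read off from the bivariate generating function $\sum_{n,k}|s(n,k)|u^k z^n/n! = (1-z)^{-u}$, to which the cited machinery applies cleanly.

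I would split $E_r(n) = S_{\mathrm{main}} + S_{\mathrm{tail}}$, where $S_{\mathrm{main}}$ sums over $|k-\log n|\le (\log n)^{2/3}$. For the tail, the uniform bound $\max_k G_k(n) = O((\log n)^{-1/2})$ gives $G_k(n)^r \le \max_k G_k(n)^{r-1}\cdot G_k(n)$, and Chebyshev's inequality (using $\Var(\omega_n)\sim \log n$) yields
$$S_{\mathrm{tail}} \ll \frac{1}{(\log n)^{(r-1)/2}}\cdot \Prob\!\left(|\omega_n-\log n|>(\log n)^{2/3}\right) = o\!\left(\frac{1}{(\log n)^{(r-1)/2}}\right).$$

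On the main range, substituting the local CLT gives $G_k(n)^r \sim (2\pi\log n)^{-r/2}\exp(-r(k-\log n)^2/(2\log n))$. The sum over integer $k$ is a unit-spacing Riemann sum for a Gaussian of standard deviation $\sqrt{\log n/r} \gg 1$, so
$$S_{\mathrm{main}} \sim \frac{1}{(2\pi \log n)^{r/2}}\int_{-\infty}^{\infty} e^{-r t^2/(2\log n)}\, dt = \frac{1}{(2\pi \log n)^{r/2}}\sqrt{\frac{2\pi \log n}{r}} = \frac{c_r}{(\log n)^{(r-1)/2}},$$
with $c_r = 1/((2\pi)^{(r-1)/2}\sqrt{r})$, as claimed.

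The main obstacle is not the Gaussian computation or the tail estimate (both routine) but securing a local CLT whose $(1+o(1))$ is genuinely uniform on a window of width $\gg \sqrt{\log n}$, so that the Riemann-sum-to-integral step is justified. This uniformity is provided precisely by the Flajolet--Sedgewick singularity-analysis framework, or equivalently Hwang's quasi-powers theorem, applied to $(1-z)^{-u}$. The case $r=2$ (Wilf) is covered by this same argument, and no new ingredient is required for general $r\ge 2$.
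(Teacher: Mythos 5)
Your proof is correct, and it reaches the right constant, but it is organized dually to the paper's argument. The paper never invokes a local central limit theorem: it writes $E_r(n)=\sum_k G_k(n)^r$ as the $r$-fold convolution of the characteristic function $f_n(t)=\E(e^{it\omega_n})$ evaluated at $0$, i.e.\ $\frac{1}{(2\pi)^{r-1}}\int_{[-\pi,\pi]^{r-1}}\prod_j f_n(t_j)\,dt$ with $t_r=-(t_1+\cdots+t_{r-1})$, extracts the asymptotic $f_n(t)\sim \Gamma(e^{it})^{-1}e^{\log n\,(e^{it}-1)}$ from the explicit generating function $(1-z)^{-e^{it}}$, and then performs Laplace's method on the small cube $\max_j|t_j|<(\log n)^{-0.4}$ directly on the Fourier side. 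You instead work on the physical side: a local CLT for $\omega_n$, a main window $|k-\log n|\le(\log n)^{2/3}$, a Chebyshev tail bound, and a Riemann-sum-to-Gaussian-integral step. The two computations are Fourier duals of one another, and the delicate point is the same in both (uniformity of the approximation on a window of width $\gg\sqrt{\log n}$, whether for $f_n(t)$ near $t=0$ or for $G_k(n)$ near $k=\log n$); the paper handles it by hand from $(1-z)^{-e^{it}}$, while you outsource it to Hwang's quasi-powers theorem or singularity analysis, which is legitimate and arguably cleaner to cite for general $r$. Two minor points you should make explicit if you write this up: the bound $\max_k G_k(n)=O((\log n)^{-1/2})$ must hold uniformly over \emph{all} $k$, not just the main window (it follows from the additive-error form of the local CLT, or from unimodality of the Stirling cycle numbers together with the value at the mode); and the mean of $\omega_n$ is $H_n=\log n+\gamma+o(1)$ rather than $\log n$, an $O(1)$ recentering that is harmless against the scale $\sqrt{\log n}$ but worth a sentence.
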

 The case $r=2$ can be found in the preprint \cite{Wilf}. 

 \begin{proof}
 Let $f_n(t):=\E(e^{it\omega_n}) $   be  the characteristic function of $\omega_n$, 
 which, by definition, has Fourier expansion
$$
f_n(t) = \sum_{k=0}^\infty \Prob(\omega_n=k) e^{ikt}=\sum G_k(n)e^{ikt}
\;.
$$
Now note that 
\begin{equation*}
E_r(n) =  (f_n\conv \dots \conv f_n)(0)
\end{equation*}
with convolution given by 
$$ f\conv g(x) = \frac 1{2\pi}\int_{-\pi}^\pi f(y)g(x-y)dy
\;.
$$
Indeed, the convolution has Fourier coefficients
$$ \widehat{ f\conv g}(k) = \widehat f (k) \widehat g(k)  $$
 so that the Fourier expansion of the $r$-fold convolution $f_n\conv \dots \conv f_n$ is 
 $$
 f_n\conv \dots \conv f_n(x) = \sum_k G_k(n)^r e^{ikx}
$$
 whose value at $x=0$ is $\sum_k 
 n)^r = E_r(n)$.

As $n\to \infty$, $f_n(t)$ is asymptotic  to\footnote{It is known (see \cite[\S 1.1]{ABT})  
that $f_n(t):=\E(e^{it\omega_n}) = \prod_{j=1}^n (1-\frac 1j +\frac{e^{it}}{j}).$}
\begin{equation}\label{asymp f_n}
f_n(t)\sim \frac{1}{\Gamma(e^{it}) } e^{ \log n(e^{it}-1)}=:g_{\log n}(t)
\;.
\end{equation}
% Indeed, recall that 
%$$\Gamma(z) = \frac{e^{-\gamma z}}{z} \prod_{j=1}^\infty (1+\frac zj)^{-1}e^{z/j}\;.$$
%Now setting $z=e^{it}-1$, we have 
%\begin{equation*}
%\begin{split}
%f_n(t)  &= \prod_{j=1}^n (1+\frac zj) = \prod_{j=1}^n (1+\frac zj) e^{-z/j} \times e^{z\sum_{j=1}^n \frac 1j}
%\\ 
%&\sim \frac{e^{-\gamma z}}{\Gamma(1+z)} e^{z(\log n+\gamma + O(\frac 1n))}\sim \frac{e^{  z \log n}}{\Gamma(1+z)} 
%\;.
%\end{split}
%\end{equation*}

Indeed, we have 
$$ f_n(t) =  \sum_{k=0}^\infty \Prob(\omega_n=k) e^{ikt}  = \sum_{\lambda \vdash n} p(\lambda)e^{i t\sum_{j}\lambda_j}
$$
and hence the generating function $F(z,t) =\sum_{n\geq 0}f_n(t)z^n$ is given by 
\begin{equation*}
\begin{split}
F(z,t) &= \sum_\lambda p(\lambda) e^{it \sum_j\lambda_j}z^{\sum_j j\lambda_j} 
 = \sum_\lambda \prod_{j} \frac{ z^{j\lambda j} e^{it \lambda_j}}{j^{\lambda_j} \cdot \lambda_j !}
\\& =\prod_{j=1}^\infty \sum_{\lambda_j\geq 0}\frac 1{\lambda_j!} \Big( \frac{ z^{j} e^{it }}{j}\Big)^{\lambda^j}
=\exp\sum_{j=1}^\infty \frac{z^je^{it}}{j}
\end{split}
\end{equation*}
that is 
$$ F(z,t)=(1-z)^{-e^{it}}$$
so that $f_n(t)$ is the $n$-th Taylor coefficient of $F(z,t) = (1-z)^{-e^{it}}$. The $n$-th Taylor coefficient of $(1-z)^{-w}$ is asymptotic to 
$$
[z^n](1-z)^{-w} = \frac{\Gamma(n+w)}{\Gamma(w)\Gamma(n+1)}
\sim \frac{n^{w-1}}{\Gamma(w)}, \quad n\to \infty
$$
which gives \eqref{asymp f_n}.

Therefore
$$
E_r(n)\sim   (g_{\log n}\conv \dots \conv g_{\log n})(0)\;.
$$
So we need an asymptotic evaluation, as $\gl \to \infty$, of the $r$-fold convolution
$
%J_r(\gl):=  
 (g_{\gl}\conv \dots \conv g_{\gl})(0)$. 

\begin{lemma}\label{lem Ilambda(r)}
As $\gl\to \infty$, 
$$
(g_{\gl}\conv \dots \conv g_{\gl})(0) %J_r(\gl)
\sim \frac{c_r}{ \gl^{(r-1)/2}}, \qquad  \quad c_r=\frac 1{(2\pi)^{(r-1)/2}\sqrt{r}} 
\;.
$$
\end{lemma}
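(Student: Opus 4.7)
The plan is to apply the Laplace (saddle point) method to the $(r-1)$-fold integral
\begin{equation*}
(g_{\gl}^{\conv r})(0) = \frac{1}{(2\pi)^{r-1}} \int_{[-\pi,\pi]^{r-1}} \prod_{j=1}^{r} g_{\gl}(t_j)\, dt_1\cdots dt_{r-1},
\end{equation*}
where $t_r := -(t_1+\cdots+t_{r-1})$ is forced by iterated convolution. Since $|g_{\gl}(t)| = e^{\gl(\cos t-1)}/|\Gamma(e^{it})|$ and $\cos t - 1 \leq -2t^2/\pi^2$ on $[-\pi,\pi]$, the integrand is concentrated in a cube of side $O(\gl^{-1/2})$ around the origin, the unique stationary point of the phase.

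First, I would localize by truncating the integral to the box $B_{\gl}=\{\max_j |t_j| \leq \gl^{-1/2}\log \gl\}$; by the Gaussian estimate above together with the boundedness of $1/|\Gamma(e^{it})|$ on the circle, the complement contributes at most $O\bigl(e^{-c(\log \gl)^2}\bigr)$. Inside $B_{\gl}$ one has $\prod_j \Gamma(e^{it_j})^{-1}\to 1$ uniformly since $\Gamma(1)=1$, so the gamma factors play no role in the leading term.

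Second, rescale $t_j = s_j/\sqrt{\gl}$ and Taylor expand the exponent. Using $e^{it}-1 = it - t^2/2 + O(|t|^3)$ together with the constraint $\sum_{j=1}^{r} t_j = 0$, the linear terms cancel and
\begin{equation*}
\gl\sum_{j=1}^{r}(e^{it_j}-1) = -\frac{1}{2}\sum_{j=1}^{r} s_j^2 + O\bigl(\gl^{-1/2}(\log\gl)^3\bigr).
\end{equation*}
Dominated convergence, with majorant furnished by the Gaussian coming from the $\cos t - 1$ estimate, then shows the rescaled integrand converges pointwise to $\exp(-\tfrac{1}{2}\sum_j s_j^2)$ and that the rescaled integral converges to the Gaussian integral on all of $\R^{r-1}$.

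Finally, the quadratic form $\sum_{j=1}^r s_j^2$ subject to $s_r = -\sum_{j<r}s_j$ equals $\mathbf{s}^\top M\mathbf{s}$ on $\R^{r-1}$, where $M = I_{r-1} + J_{r-1}$ and $J_{r-1}$ is the all-ones matrix. The eigenvalues of $M$ are $r$ (simple, with eigenvector $\mathbf{1}$) and $1$ (multiplicity $r-2$), so $\det M = r$, and
\begin{equation*}
\frac{1}{(2\pi)^{r-1}} \cdot \frac{1}{\gl^{(r-1)/2}} \int_{\R^{r-1}} e^{-\frac{1}{2} \mathbf{s}^\top M \mathbf{s}}\, d^{r-1}s = \frac{1}{(2\pi)^{(r-1)/2}\sqrt{r}\, \gl^{(r-1)/2}} = \frac{c_r}{\gl^{(r-1)/2}},
\end{equation*}
which is the claimed asymptotic. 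The main technical obstacle is balancing the truncation radius $\gl^{-1/2}\log \gl$ against the cubic Taylor remainder so as to justify the uniform passage to the Gaussian limit; once localization is secure, the computation $\det(I+J)=r$ delivers the precise constant.
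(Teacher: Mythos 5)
Your argument is correct and is essentially the paper's proof: both localize the $(r-1)$-fold integral to a shrinking neighbourhood of the origin (the paper uses the cube $\max_j|t_j|<\gl^{-0.4}$ rather than $\gl^{-1/2}\log\gl$, which is immaterial), replace $g_\gl(t_j)$ by the Gaussian $e^{i\gl t_j-\gl t_j^2/2}$ using $\sum_j t_j=0$ to kill the linear terms, and reduce to a Gaussian integral over $\R^{r-1}$. The only (cosmetic) difference is that you evaluate that integral via $\det(I_{r-1}+J_{r-1})=r$, while the paper computes it as an $r$-fold convolution of $e^{-u^2}$ by Fourier inversion; both yield $c_r=(2\pi)^{-(r-1)/2}r^{-1/2}$.
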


This will give our claim $ E_r(n)\sim   \frac {c_r }{( \log n)^{(r-1)/2}}$. 
\end{proof}

 \subsection{Proof of Lemma~\ref{lem Ilambda(r)}}
We have
$$(g_{\gl}\conv \dots \conv g_{\gl})(0) = \frac 1{(2\pi)^{r-1}}\int_{[-\pi,\pi]^{r-1}} \prod_{j=1}^r g_{\gl}(t_j) dt_1\dots dt_{r-1} 
$$
where we set 
$$t_r=-(t_1+\dots +t_{r-1})
\;.$$
%\marginpar{ EXPLAIN} 
 
 Outside of the cube $\{( \max_j |t_j|)< \gl^{-0.4}\}$, we have 
 $$\gl \max_j (1-\cos t_j) \gg \gl^{0.2} $$
 and so
 $$
\Big|\prod_{j=1}^r g_{\gl}(t_j) \Big| = \prod_{j=1}^r %|\frac{e^{it_j}(1+e^{it_j})}{\Gamma(2+e^{it_j})} |
\frac {e^{-\gl(1-\cos t_j)}}{|\Gamma(e^{it_j})|}  \ll e^{-c \gl^{0.2}}
 $$
is very rapidly decreasing. So we have
$$
(g_{\gl}\conv \dots \conv g_{\gl})(0) \sim 
 \frac 1{(2\pi)^{r-1}}
 \int_{ ( \max_j |t_j|)< \gl^{-0.4}}  \prod_{j=1}^r g_{\gl}(t_j) dt_1 \dots dt_{r-1} 
\;.
$$

For $|t|<\gl^{-0.4}$ we may write 
$$
g_{\gl}(t)=\frac{1}{\Gamma(e^{it}) } e^{\gl(e^{it}-1)}=
e^{i\gl t-\gl t^2/2}\Big( 1+O(\gl^{-0.2}) \Big)
\;.
$$
Hence in this small cube, 
\begin{multline*}
\int_{ ( \max_j |t_j|)< \gl^{-0.4}}  \prod_{j=1}^r g_{\gl}(t_j) dt_1 \dots dt_{r-1} 
\\
\sim  \int_{ ( \max_j |t_j|)< \gl^{-0.4}}
   e^{i\gl\sum_j t_j} e^{-\frac \gl 2\sum_j t_j^2} 
 dt_1 \dots dt_{r-1} 
 \\
 = \int_{ ( \max_j |t_j|)< \gl^{-0.4}}
     e^{-\frac \gl 2\sum_j t_j^2} 
 dt_1 \dots dt_{r-1} 
\end{multline*}
since $\sum_{j=1}^r t_j=0$. 

Changing variables $u=\sqrt{\frac \gl 2}\;t$  gives
\begin{equation*}
\int_{ ( \max_j |t_j|)< \gl^{-0.4}}
  e^{-\frac \gl 2\sum_j t_j^2} 
dt_1 \dots dt_{r-1} 
 \sim \Big(\sqrt{\frac 2\gl}\Big)^{r-1} 
 \int_{\R^{r-1}}   \prod_{j=1}^r e^{-u_j^2}  \cdot du_1\dots du_{r-1}
\end{equation*}
where $u_r = -(u_1+\dots +u_{r-1})$. 
 Therefore we find 
$$
 (g_{\gl}\conv \dots \conv g_{\gl})(0) \sim \frac{c_r}{\gl^{(r-1)/2}}
 $$
with 
$$ c_r = \frac{(\sqrt{2})^{r-1}}{(2\pi)^{r-1}}  \int_{\R^{r-1}}   \prod_{j=1}^r e^{-u_j^2}du_j
\;.
$$
It remains to determine the  Gaussian integral. This is precisely the $r$-fold convolution of $e^{-u^2}$ with itself (convolution over $\R$): 
$$
 \int_{\R^{r-1}}   \prod_{j=1}^r e^{-u_j^2}du_j
  = (e^{-u^2}\conv_\R \dots \conv_\R e^{-u^2})(0)
 \;.
 $$
Using Fourier inversion, this equals 
$$
(e^{-u^2}\conv_\R \dots \conv_\R e^{-u^2})(0) = \int_{-\infty}^\infty
\widehat{(e^{-u^2}\conv_\R \dots \conv_\R e^{-u^2})}  (x) dx  = 
\int_{-\infty}^\infty \Big(\widehat{e^{-u^2}} (x)\Big)^rdx
$$
Now the Fourier transform of $e^{-u^2}$ is 
$$
\widehat{e^{-u^2}} (x) = \int_{-\infty}^\infty e^{-u^2}e^{-2\pi i ux}du = \sqrt{\pi}
e^{-\pi^2x^2}
\;.
$$
Hence 
$$
(e^{-u^2}\conv_\R \dots \conv_\R e^{-u^2})(0)= 
\int_{-\infty}^\infty \Big( \sqrt{\pi}
e^{-\pi^2x^2}\Big)^rdx =\pi^{r/2} \int_{-\infty}^\infty e^{-r\pi^2x^2}dx =\frac{\pi^{(r-1)/2}}{\sqrt{r}}
\;.
$$
Thus 
$$c_r=  \frac{(\sqrt{2})^{r-1}}{(2\pi)^{r-1}}\frac{\pi^{(r-1)/2}}{\sqrt{r}}
=\frac 1{(2\pi)^{(r-1)/2}\sqrt{r}} 
$$
concluding the proof. \qed

\subsection{Random permutations with the same cycle structure: Proof of \eqref{FFGPP gen}}\label{sec:Flajolet}
 Let $W_r(n)$ be the probability that $r$ random permutations on $n$ letters have the same cycle structure. 
Arguing as in \cite[\S 4.2]{FFGPP} (who treat the case $r=2$, for which a completely different argument is also given in \cite{BBW}) shows that  
\begin{equation}\label{FFGPP gen*}
W_r(n) \sim \frac{A_r}{n^r},\quad {\rm as}\; n\to \infty
\end{equation}
where 
$$ A_r = \sum_{n=1}^\infty W_r(n)= \prod_{k=1}^\infty I_r(\frac 1{k^r})
$$
%The proof of \eqref{FFGPP gen} gives 
%$$ A_m = \prod_{k=1}^\infty I_k(\frac 1{k^m})$$
with  
$$ I_r(y) = \sum_{j=0}^\infty \frac{y^j}{(j!)^r}= \, _0F_{r-1}(\; ;\underbrace{1,\dots,1}_{r-1};y) $$
is the generalized hypergeometric function.  
Thus we evaluate 
$$ A_2 = 4.2634, \quad A_3 = 2.59071\dots, \quad A_4 = 2.23647\dots $$

Indeed,  the generating function of the probability that $r$ random permutations share the same cycle structure is 
$$
W^{(r)}(z) :=\sum_{n\geq 0} W_r(n)  z^n  =\sum_{n\geq 0} \sum_{\lambda \vdash n} \frac{z^{\lambda_1+2\lambda_2+\dots}}{\prod_i i^{r\lambda_i} (\lambda_i!)^r} 
=\prod_{k\geq 1} \Big( \sum_{\lambda_k\geq 0} \frac{z^{k \lambda_k}}{k^{r \lambda_k}(\lambda_k!)^r} \Big) 
\;.
$$  
Thus 
$$
W^{(r)}(z) = \prod_{k\geq 1} I_r(\frac{z^k}{k^r})
$$
with 
$$ I_r(y) = \sum_{j=0}^\infty \frac{y^j}{(j!)^r}= \, _0F_{r-1}(\; ;\underbrace{1,\dots,1}_{r-1};y) 
\;.
$$
We have 
$$H_r(y):=\log I_r(y) = \sum_{\ell \geq 1} h^{(r)}_\ell  y^\ell  = y+O(y^2)
$$
and so 
$$
W^{(r)}(z) = \exp\Big( \sum_{k\geq 1} H_r(\frac{z^k}{k^r} )\Big) = \exp\Big( \sum_{\ell\geq 1} h^{(r)}_\ell {\rm Li}_{r \ell}(z^\ell) \Big)
$$
where 
$${\rm Li}_\nu(z) = \sum_{j\geq 1} \frac{z^j}{j^\nu}
$$
is the polylogarithm function. 

The expansion converges in the unit disk $|z|<1$, and the dominant singularity on the unit circle $|z|=1$ is at $z=1$, where 
$${\rm Li}_r(z) \sim  \frac{(-1)^{r-1}}{(r-1)!} (1-z)^{r-1} \log \frac 1{1-z},\quad z\to 1^-
$$
so that
$$
W^{(r)}(z) \sim W^{(r)}(1)\Big( 1+  \frac{(-1)^{r-1}}{(r-1)!} (1-z)^{r-1} \log \frac 1{1-z}+\dots \Big), \quad z\to 1^-
\;.
$$
By the argument of \cite{FFGPP}, the $n$-th coefficient  in the expansion of $W^{(r)}(z)/W^{(r)}(1)$ at $z=0$ is asymptotic to that of $\frac{(-1)^{r-1}}{(r-1)!} (1-z)^{r-1} \log \frac 1{1-z}$, which is   
$$
\frac{(-1)^{r-1}}{(r-1)!}  [z^n](1-z)^{r-1} \log \frac 1{1-z}\sim    \frac 1{n^r},\quad n\to \infty
$$
and hence  
$$W_n^{(r)}\sim \frac {W^{(r)}(1)}{n^r},\quad n\to \infty$$
which is the claimed result. \qed

%%%%%%%%%%%%% BEGIN APPENDIX 

\appendix
\section{Permutations with the same cycle structure\\ by Ron Peled} \label{sec:ron appendix}

For integer $n\ge 1$, a vector $\lambda = (\lambda_1,\ldots,\lambda_n)$ of non-negative integers is said to be a \emph{partition of $n$}, denoted $\lambda \vdash n$, if $\sum_{j=1}^n j\lambda_j=n$.
The cycle structure of a permutation on $n$ letters is the partition of $n$ given by $\lambda = (\lambda_1,\dots, \lambda_n)$, where $\lambda_j$ is the number of cycles of length $j$.

Let $W_r(n)$ be the probability that $r$ uniformly random and independent permutations on $n$ letters have the same cycle structure. Define also $W_r(0):=1$.
In this section we prove that
\begin{theorem}\label{thm:equal cycle}
For $r\geq 2$ integer,
$$ W_r(n)\sim \frac{A_r}{n^r}, \quad n\to \infty$$
where
$$ A_r := \sum_{m=0}^\infty W_r(m) .
$$
\end{theorem}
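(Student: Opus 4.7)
The plan is to recast $W_r(n)$ probabilistically and extract the asymptotic via a big-jump argument. By Cauchy's formula, the probability that a uniform random permutation of $[n]$ has cycle structure $\lambda$ is $p(\lambda):=\prod_{j}(j^{\lambda_j}\lambda_j!)^{-1}$, so $W_r(n)=\sum_{\lambda\vdash n}p(\lambda)^r$. Summing over partitions of all sizes and interchanging sum and product,
$$A_r\;=\;\sum_{n\geq 0}W_r(n)\;=\;\prod_{j\geq 1}I_r(1/j^r),\qquad I_r(y):=\sum_{k\geq 0}\frac{y^k}{(k!)^r},$$
which is finite for $r\geq 2$ since $\log I_r(1/j^r)=1/j^r+O(1/j^{2r})$. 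I then introduce independent random variables $\{\tilde Z_j\}_{j\geq 1}$ with the biased distribution $\Prob(\tilde Z_j=k)=(k!)^{-r}j^{-rk}/I_r(1/j^r)$. Since $\sum_j\Prob(\tilde Z_j\geq 1)\ll\sum_j j^{-r}<\infty$, Borel--Cantelli gives $\tilde Z_j=0$ eventually almost surely, so $\tilde T:=\sum_j j\tilde Z_j$ is almost surely finite. Expanding $\Prob(\tilde T=n)$ as a sum over configurations $(k_j)$ with $\sum jk_j=n$ and factoring out the normalizations yields the clean identity
$$W_r(n)\;=\;A_r\cdot\Prob(\tilde T=n),$$
so the theorem reduces to showing $\Prob(\tilde T=n)\sim 1/n^r$.

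To analyze $\Prob(\tilde T=n)$, I decompose on $M:=\max\{j:\tilde Z_j\geq 1\}$ and on the value $\tilde Z_M$. Writing $\tilde T_{<m}:=\sum_{j<m}j\tilde Z_j$, the dominant ``single big jump'' contribution comes from $\tilde Z_m=1$ with $m>n/2$. Using $\Prob(\tilde T_{<m}=s)=W_r(s)/\prod_{j<m}I_r(1/j^r)$, which is valid for $s<m$, the factors telescope to
$$\Prob(\tilde Z_m=1)\cdot\Prob(\tilde T_{<m}=n-m)\cdot\prod_{j>m}\Prob(\tilde Z_j=0)\;=\;\frac{W_r(n-m)}{A_r\,m^r}\qquad(m>n/2).$$
Summing over $m>n/2$ (equivalently $s=n-m<n/2$) and applying dominated convergence with majorant $(2^r/A_r)W_r(s)$,
$$\sum_{m>n/2}\frac{W_r(n-m)}{A_r\,m^r}\;=\;\frac{1}{A_r n^r}\sum_{s<n/2}W_r(s)\bigl(1-\tfrac{s}{n}\bigr)^{-r}\;\longrightarrow\;\frac{1}{A_r n^r}\cdot A_r\;=\;\frac{1}{n^r}.$$

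The main obstacle is showing that the remaining contributions are $o(1/n^r)$. These are of two kinds: (i) $\tilde Z_m\geq 2$ at some single site, and (ii) $\tilde Z_m=1$ with $m\leq n/2$. Contribution (i) carries an extra factor $1/m^r$ per multiplicity and is readily bounded by $O(1/n^{2r-1})=o(1/n^r)$ via direct convolution estimates. Contribution (ii) is the subtle case: here the restricted sum $W_r^{<m}(n-m):=\sum_{\mu\vdash n-m,\,\text{parts}<m}p(\mu)^r$ replaces $W_r(n-m)$. The key observation is the identity $W_r(s)-W_r^{<m}(s)=\sum_{k\geq m,\,k\leq s}W_r(s-k)/k^r$ (valid when $m\geq s/2$, where only single-big-part partitions are excluded), whose right-hand side equals $(1+o(1))A_r/s^r$ by the same DCT argument. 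For the remaining range $m\ll s$, the generating function $\prod_{j<m}I_r(z^j/j^r)$ is entire, so $W_r^{<m}(s)$ decays super-polynomially in $s/m$; a dyadic split of $m\in[1,n/2]$ controls all scales. Combining these estimates gives $\Prob(\tilde T=n)\sim 1/n^r$ and completes the proof.
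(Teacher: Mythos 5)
Your reduction $W_r(n)=A_r\,\Prob(\tilde T=n)$ is correct (it is the probabilistic repackaging of the identity $\sum_m W_r(m)=\prod_k I_r(k^{-r})$), and your extraction of the main term from the single big jump $\tilde Z_m=1$, $m>n/2$, is sound; in substance it is the same computation as Lemma~\ref{lem:main term for sIII} and \eqref{asymp of sIII} in the appendix, where the unique cycle of length $t\ge n-a$ plays the role of your big jump. The multiplicity-$\ge 2$ contribution is also fine: the extra factor $m^{-r}$ lets the crude bound $W_r^{<m}(s)\le W_r(s)$ together with mere convergence of $\sum_s W_r(s)$ handle all $m\gtrsim\sqrt n$, and your entire-function bound handles $m\lesssim\sqrt n$.

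The gap is in contribution (ii), for $m$ in the intermediate range, roughly $n/\log n\lesssim m\le \delta n$; your two tools do not meet there. Any bound obtained from the entire generating function depends only on the ratio $s/m$, say $W_r^{<m}(s)\le C(c)e^{-cs/m}$; summing $m^{-r}e^{-c(n-m)/m}$ over a dyadic block $m\asymp n/v$ gives about $(n/v)(v/n)^r e^{-cv}=n^{1-r}v^{r-1}e^{-cv}$, which is $o(n^{-r})$ only when $v\gtrsim\log n$, i.e.\ $m\lesssim n/\log n$. For larger $m$ you must fall back on $W_r^{<m}(n-m)\le W_r(n-m)$, but the only unconditional input you have is $\epsilon_n:=\sum_{s\ge n/2}W_r(s)=o(1)$ with no rate, and this controls $\sum_{m\ge M}m^{-r}W_r(n-m)$ only for $M\gtrsim n\,\epsilon_n^{1/(2r)}$, where $\epsilon_n\to0$ at an unknown speed. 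Unless you can show a priori that $\sum_{s\ge N}W_r(s)\ll(\log N)^{-r}$ (which follows for $r\ge3$ from $W_r(n)\le\max_\lambda p(\lambda)^{r-1}\ll n^{1-r}$, but is not available this way for $r=2$), a window of scales is left uncovered. Your identity for $W_r(s)-W_r^{<m}(s)$ does not help here either, since it requires $m\ge s/2$, i.e.\ $m\ge n/3$. The appendix closes exactly this window (its set $\mathcal B$) by first establishing the pointwise bound $W_r(k)\le C_r k^{-r}$ by strong induction on $k$ and feeding it into the middle-range sum; you would need to add such a bootstrap, or some other a priori polynomial bound on $W_r$, to complete the argument.
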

%\marginpar{Does the sum start at $m=0$?}

Fix an integer $r\ge 2$. Cauchy's formula says the probability that a uniformly sampled permutation on $n$ letters has cycle structure $\lambda$ is
\begin{equation}\label{eq:Cauchy_formula}
p(\lambda) := \prod_{j=1}^n\frac 1{j^{\lambda_j}\; \lambda_j!}
\end{equation}
so that
\begin{equation}\label{eq:sum_of_probabilities}
\sum_{\lambda\vdash n} p(\lambda) = 1
\end{equation}
and
$$
W_r(n) = \sum_{\lambda\vdash n} p(\lambda)^r .
$$
We denote by $\omega_n(\lambda)$ the number of cycles in $\lambda$:
$$\omega_n(\lambda) := \sum_j \lambda_j ,
$$
%(we had earlier used $\omega_n(\sigma)$ for the number of cycles in a permutation $\sigma$) 
and by $T(\lambda)$ the length of the longest cycle:
$$T(\lambda) := \max\left( j: \lambda_j\neq 0\right) .
$$

For $n\ge 1$ we set
$$ L=L(n):=\log_2(2n^3), \qquad a=a(n) :=  \frac{1}{3}n^{2/3}
$$
($\log_2$ is the base $2$ logarithm) and from the set of cycle structures $\lambda\vdash n$ we form three subsets:
\begin{itemize}

\item $ \I = \I(n)$\;:  $\omega_n(\lambda)> L$;
\\
\item $\II = \II(n)$\;\,:  $\omega_n(\lambda)\leq L $ and $n/L\leq T(\lambda)<n-a$ ;
\\
\item $\III = \III(n)$\;\,\,: $n-a\leq T(\lambda)\leq n$.

\end{itemize}
Further, set
$$\sI = \sI(n) := \sum_{\lambda \in \I} p(\lambda)^r$$
and likewise for $\sII$, $\sIII$.

  We claim that $\I\cup \II \cup \III$ exhausts all cycle structures $\lambda\vdash n$.
Indeed, note that $\lambda\vdash n$ means $\sum_j j\lambda_j=n$, and the sum takes place only over $j\leq T(\lambda)$ by definition. Hence
$$n = \sum_{j=1}^{T(\lambda)} j\lambda_j \leq T(\lambda)\sum_j \lambda_j = T(\lambda)\omega_n(\lambda)$$
so that $ T(\lambda)\geq n/\omega_n(\lambda)$.
Now if $\lambda\notin \I$, i.e.\  $\omega_n(\lambda)\leq L$, then
$$ T(\lambda)\geq n/\omega_n(\lambda) \geq n/L$$
so that $\lambda \in \II\cup \III$. Note that we omitted  the requirement that $\omega_n(\lambda)\leq L$ from $\III$, so we do not get a disjoint union: $\III\cap \I\neq \emptyset$. Nonetheless, we have
\begin{equation}\label{bds for W_r(n)}
\sIII\leq W_r(n) \leq  \sI + \sII +\sIII
\end{equation}
and we will see that the dominant contribution to $W_r(n)$ for large $n$ will be from $\sIII$.

\begin{lemma}\label{lem:sI} For $n\ge 1$,
$$ \sI \leq \frac{1}{n^{3(r-1)}}.
$$
\end{lemma}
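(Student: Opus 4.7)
The plan is to reduce the estimate on $\sI$ to a tail bound on the number of cycles of a single uniformly random permutation, which I then prove via Markov's inequality applied to an exponential moment of $\omega_n$.

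First, by independence of $r$ iid uniform permutations $\sigma_1,\dots,\sigma_r$ on $n$ letters,
\begin{equation*}
\sI = \sum_{\lambda\in\I} p(\lambda)^r = \Prob\bigl(\lambda(\sigma_1)=\dots=\lambda(\sigma_r)\in\I\bigr) \leq \Prob(\omega_n>L)^r,
\end{equation*}
since the event on the left forces $\omega_n(\sigma_i)>L$ for every $i$. Thus it suffices to show $\Prob(\omega_n>L)\leq 1/n^3$.

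Next I would compute the exponential moment $E[u^{\omega_n}]$ using Cauchy's formula \eqref{eq:Cauchy_formula}: the generating identity $\sum_n z^n\sum_{\lambda\vdash n}p(\lambda)u^{\omega_n(\lambda)}=(1-z)^{-u}$ yields $E[u^{\omega_n}]=\prod_{j=1}^n (j+u-1)/j$. In particular, setting $u=4$ gives $E[4^{\omega_n}]=\binom{n+3}{3}=(n+1)(n+2)(n+3)/6$. Combined with $4^L=(2^L)^2=(2n^3)^2=4n^6$, Markov's inequality yields
\begin{equation*}
\Prob(\omega_n>L)\leq\frac{E[4^{\omega_n}]}{4^L}=\frac{(n+1)(n+2)(n+3)}{24\,n^6}\leq\frac{1}{n^3},
\end{equation*}
the last inequality being the elementary $(1+1/n)(1+2/n)(1+3/n)\leq 24$, valid for every $n\geq 1$. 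Combining with the first step, $\sI\leq 1/n^{3r}\leq 1/n^{3(r-1)}$.

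The main design point worth flagging is the calibration of $L=\log_2(2n^3)$ to the exponential weight $u=4$. The general picture is that $E[u^{\omega_n}]=\Theta(n^{u-1})$ while $u^L=(2n^3)^{\log_2 u}$, so the pair $(u,L)=(4,\log_2(2n^3))$ is the minimal one yielding a $1/n^3$ tail and hence $\sI\leq 1/n^{3(r-1)}$ for \emph{every} $r\geq 2$. The more naive choice $u=2$ (perhaps suggested by the base of the logarithm in $L$) only gives $\Prob(\omega_n>L)=O(1/n^2)$ and so $\sI\leq 1/n^{2r}$, which would fall short of the target once $r\geq 4$; this is the one genuine technical observation behind the proof.
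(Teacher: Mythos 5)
Your proof is correct, and it takes a genuinely different route from the paper's. The paper argues deterministically: from Cauchy's formula it extracts the pointwise bound $p(\lambda)\le 2\cdot 2^{-\omega_n(\lambda)}$, writes $p(\lambda)^r\le p(\lambda)\bigl(2\cdot 2^{-L}\bigr)^{r-1}$ for $\lambda\in\I$, and then sums using $\sum_{\lambda\vdash n}p(\lambda)=1$; this lands exactly on $1/n^{3(r-1)}$ with no moment computation. You instead bound $\sI\le\Prob(\omega_n>L)^r$ and run a Chernoff--Markov argument on the exponential moment $\E[4^{\omega_n}]=\binom{n+3}{3}$, whose exact form follows from the same generating function $(1-z)^{-u}$ the paper uses elsewhere. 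Your version yields the stronger estimate $1/n^{3r}$, and your calibration remark is substantively right: since $\E[u^{\omega_n}]=\Theta(n^{u-1})$ while $u^L=(2n^3)^{\log_2 u}$, the choice $u=2$ gives only $\Prob(\omega_n>L)=O(1/n^2)$ and hence $\sI=O(1/n^{2r})$, which is weaker than $1/n^{3(r-1)}$ precisely when $r\ge 4$, so the passage to $u=4$ is not cosmetic. The trade-off is that the paper's argument is entirely elementary (no probabilistic inequalities, no moment generating function), while yours needs the identity $\E[u^{\omega_n}]=\prod_{j=1}^n\frac{j+u-1}{j}$; both are short and uniform in $n\ge 1$, and the one small thing worth making explicit in your write-up is that $\omega_n>L$ with $\omega_n$ an integer still gives $4^{\omega_n}\ge 4^L$ only after noting $4^{\omega_n}>4^L$ suffices for Markov, which you implicitly use and which is fine.
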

\begin{proof}
We use, for $\lambda_j\geq 0$, that
$$ \frac 1{j^{\lambda_j} \; \lambda_j!} \leq \frac {2}{2^{\lambda_j}}$$
(where the factor $2$ is only needed when $j=1$ and $1\le\lambda_1\le 3$). Therefore
$$
p(\lambda) = \prod_{j=1}^n \frac 1{j^{\lambda_j} \; \lambda_j!}
\leq
2 \prod_{j=1}^n \frac 1{2^{\lambda_j}} = \frac{2}{2^{\omega_n(\lambda)}} ,
$$
from which we deduce, using \eqref{eq:sum_of_probabilities} and the definition of $L$, that
\begin{equation*}
\begin{split}
\sI &= \sum_{\substack{\lambda \vdash n\\ \omega_n(\lambda)> L}} p(\lambda)^r
\\
&\leq
  \sum_{\substack{\lambda \vdash n\\ \omega_n(\lambda) > L}} p(\lambda) \left(\frac 2{2^{\omega_n(\lambda) }}\right)^{r-1}\\
&\leq
\left(\frac{2 }{2^{ L}}\right)^{r-1} \sum_{\lambda \vdash n} p(\lambda) =
\left(\frac{2 }{2^{ L}}\right)^{r-1} = \frac{1}{n^{3(r-1)}}.\qedhere
\end{split}
\end{equation*}
\end{proof}

\begin{lemma}\label{lem:main term for sIII} For $n\ge 1$,
\begin{equation*}
\sIII  = \sum_{0\leq m\leq a } \frac 1{(n-m)^r} W_r(m) .
\end{equation*}
\end{lemma}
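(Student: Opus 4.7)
The plan is to decompose $\sIII$ according to the length of the longest cycle. Write $T(\lambda) = n - m$ for some integer $0\le m\le a$, so the condition $\lambda \in \III$ is equivalent to $m \in \{0,1,\dots,\lfloor a\rfloor\}$.

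The first observation I would make is that for all $\lambda \in \III$ the longest cycle length $T(\lambda) = n-m$ satisfies $2T(\lambda) > n$. Indeed, this amounts to $a < n/2$, i.e.\ $n^{2/3}/3 < n/2$, which is immediate for every $n\ge 1$. As a consequence, two disjoint cycles of length $T(\lambda)$ cannot coexist inside a partition of $n$, forcing $\lambda_{T(\lambda)}=1$.

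The second step is a bijection. Given $\lambda\in\III$ with $T(\lambda)=n-m$, let $\mu = (\lambda_1,\dots,\lambda_m)$. Since $\sum_{j\ge 1} j\lambda_j = n$ and exactly one part of length $n-m$ has been removed, the remaining parts satisfy $\sum_{j=1}^{n-m-1} j\lambda_j = m$; but any $j$ with $\lambda_j\ne 0$ in this range is automatically $\le m < n-m$, so $\mu$ is a genuine partition of $m$. Conversely every $\mu\vdash m$ produces a unique $\lambda\in\III$ by adjoining a single cycle of length $n-m$. Using Cauchy's formula \eqref{eq:Cauchy_formula}, this bijection transforms the weight cleanly:
\[
p(\lambda) \;=\; \frac{1}{(n-m)^{1}\,1!}\prod_{j=1}^{m}\frac{1}{j^{\mu_j}\mu_j!} \;=\; \frac{1}{n-m}\,p(\mu).
\]

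Raising to the $r$-th power and summing first over $\mu\vdash m$, then over $m$, yields
\[
\sIII \;=\; \sum_{m=0}^{\lfloor a\rfloor}\frac{1}{(n-m)^{r}}\sum_{\mu\vdash m}p(\mu)^{r} \;=\; \sum_{0\le m\le a}\frac{W_r(m)}{(n-m)^{r}},
\]
which is the claimed identity (with the convention $W_r(0)=1$ agreeing with the single partition of $n$ consisting of one $n$-cycle). There is no real obstacle here; the only point that needs any care is verifying that $T(\lambda)>n/2$ so the longest cycle appears with multiplicity one, which is exactly why the cutoff $a = n^{2/3}/3$ was chosen small.
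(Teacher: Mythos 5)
Your proof is correct and follows essentially the same route as the paper: isolate the unique longest cycle (using $a<n/2$), peel it off via Cauchy's formula to get $p(\lambda)=\tfrac{1}{n-m}p(\mu)$ with $\mu\vdash m$, and resum. No issues.
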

\begin{proof}
Note that for  $\lambda \in \III$,  if $t=T(\lambda)\geq n-a$, then since $a<n/2$ there is a {\em unique} cycle of length $t$. Thus
$$\lambda = (\hat \lambda,\overbrace{1}^{ {\rm position}\; t},0,\dots)$$
with $\hat \lambda$ a partition of $n-t$, and then $p(\lambda) =  p(\hat \lambda)\cdot \frac 1t$ by Cauchy's formula~\eqref{eq:Cauchy_formula} (where for $t=n$, $\hat \lambda$ is empty and we define $p(\hat\lambda):=1$).  Hence
\begin{equation*}
\begin{split}
\sIII &= \sum_{n-a\leq t\leq n }\sum_{\substack{ \lambda \in \III\\ T(\lambda)=t}} p(\lambda)^r
\\
&=\sum_{n-a\leq t\leq n } \frac 1{t^r} \sum_{\hat \lambda \vdash n-t} p(\hat \lambda)^r
\\& = \sum_{n-a\leq t\leq n } \frac 1{t^r} W_r(n-t)
\end{split}
\end{equation*}
(recalling that $W_r(0)=1$). Changing variables to $m=n-t$ gives our statement.
\end{proof}

We next want to use induction to give upper bounds for $W_r(n)$ and an asymptotic bound for $\sII$.

\begin{lemma}
There is a constant $C_r>0$ so that for all $n\geq 1$,
\begin{equation}\label{upper bound for W_r(n)}
W_r(n) \leq  \frac {C_r}{n^r}.
\end{equation}
In addition
\begin{equation}\label{asymptotic bound for II}
\sII = o\left(\frac{1}{n^r}\right)\quad\text{as }n\to\infty.
\end{equation}
\end{lemma}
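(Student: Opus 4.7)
The plan is to prove both bounds simultaneously by strong induction on $n$, using as a crucial independent input the finiteness of $B_r := \sum_{m \geq 0} W_r(m)$. This finiteness can be read off from the generating function identity $W^{(r)}(z) = \prod_{k \geq 1} I_r(z^k/k^r)$ derived in Section~\ref{sec:Flajolet}, or equivalently by Tonelli from Cauchy's formula \eqref{eq:Cauchy_formula}: the positive double sum $\sum_m \sum_{\lambda \vdash m} p(\lambda)^r$ factors as $\prod_{i \geq 1} I_r(1/i^r)$, which converges for $r \geq 2$ because $\sum i^{-r} < \infty$.

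Fixing $C_r > B_r$, to be enlarged later to absorb finitely many base cases, I assume inductively that $W_r(m) \leq C_r/m^r$ for $1 \leq m < n$ and estimate each piece in $W_r(n) \leq \sI + \sII + \sIII$ from \eqref{bds for W_r(n)}. Lemma~\ref{lem:sI} immediately gives $\sI = O(n^{-3(r-1)}) = o(n^{-r})$. For $\sIII$, Lemma~\ref{lem:main term for sIII} combined with the global bound $\sum_{m \leq a} W_r(m) \leq B_r$ yields $\sIII \leq B_r/(n-a)^r = B_r n^{-r}(1 + O(n^{-1/3}))$. It is here that the independent input $B_r < \infty$ is essential: bounding $\sum_{m \leq a} W_r(m)$ by the pointwise inductive hypothesis alone would replace $B_r$ with $1 + C_r \zeta(r) > C_r$, preventing closure.

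For $\sII$, I decompose $\lambda \in \II$ by its longest cycle: if $T(\lambda) = t$ and $\lambda_t = k$, then removing those $k$ cycles yields some $\mu \vdash n - kt$ with $p(\lambda) = p(\mu)/(t^k k!)$ by Cauchy's formula. Relaxing the constraint $\omega_n(\lambda) \leq L$ and summing $p(\mu)^r$ over all $\mu \vdash n - kt$ gives
$$\sII \leq \sum_{n/L \leq t < n-a}\; \sum_{\substack{k \geq 1\\ kt \leq n}} \frac{W_r(n-kt)}{t^{rk} (k!)^r}.$$
For $k = 1$ I apply the inductive bound $W_r(n-t) \leq C_r/(n-t)^r$ and split the range $m := n-t \in (a, n - n/L]$ at $m = n/2$: the part $m \leq n/2$ contributes $O(C_r/(n^r a^{r-1})) = O(n^{-r - 2(r-1)/3})$ via $\sum_{m > a} m^{-r} = O(a^{1-r})$, while the part $m > n/2$ contributes $O(C_r L^{r-1}/n^{2r-1})$ via $\sum_{s \geq n/L} s^{-r} = O(L^{r-1}/n^{r-1})$, both $o(n^{-r})$. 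For $k \geq 2$, the trivial bound $W_r \leq 1$ suffices, since $t^{-rk}$ with $t \geq n/L$ forces total contribution $O(L^{2r-1}/n^{2r-1}) = o(n^{-r})$.

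Assembling, $W_r(n) \leq B_r n^{-r}(1 + o(1)) + o(n^{-r})$, so for $n$ beyond some $n_0$ this is $\leq C_r/n^r$; enlarging $C_r$ to cover $n \leq n_0$ (using the trivial bound $W_r(n) \leq 1$) closes the induction and, as a byproduct, proves $\sII = o(n^{-r})$. The main obstacle is the mutual dependence between the two claims---the $\sII$ estimate requires a pointwise bound on $W_r$, which is what we are proving---and it is resolved by running both claims through the same strong induction, anchored by the a priori finiteness of $B_r$.
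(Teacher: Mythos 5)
Your proof is correct and follows the same skeleton as the paper's: the trichotomy $\I\cup\II\cup\III$, Lemmas~\ref{lem:sI} and~\ref{lem:main term for sIII}, and a strong induction on $n$ that yields the pointwise bound and the $\sII$ estimate together. The genuine difference is how you anchor the induction. You import the finiteness of $B_r=\sum_m W_r(m)$ as an a priori input via Tonelli and the Euler product $\prod_{i\ge1} I_r(i^{-r})$, which lets you bound $\sIII\le B_r/(n-a)^r$ with no inductive input at all. The paper instead keeps the argument self-contained (no generating functions, which is the stated point of the appendix) by splitting $\sum_{m\le a}W_r(m)\le N_r+\sum_{m\ge N_r}C_r m^{-r}$ with a cutoff $N_r$ chosen \emph{before} $C_r$, so that the coefficient of $C_r$ fed back by the induction is small; hence your remark that the pointwise inductive hypothesis alone ``prevents closure'' is not quite accurate --- it only fails if the inductive sum starts at $m=1$, and the truncation at $N_r$ is exactly the paper's fix. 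Your treatment of $\sII$ (stripping all $\lambda_t$ maximal cycles and splitting the $k=1$ range at $m=n/2$) is sharper than needed: the paper strips a single maximal cycle and uses the crude bound (number of terms) times (largest term), $C_r\,k\,(L/(ka))^r$, which already gives $o(n^{-r})$. What your route buys is a cleaner closing of the induction and a direct link to $A_r=W^{(r)}(1)$; what it costs is self-containedness. One point you should make explicit: your $\sII$ bound carries a factor of $C_r$, so the threshold $n_0$ must be chosen independently of $C_r$. This is possible because the $C_r$-dependence is linear with a coefficient $\epsilon(n)\to0$ not involving $C_r$ (demand $\epsilon(n)\le\tfrac12$ for $n>n_0$, then take $C_r\ge \max\bigl(4B_r+O(1),\,n_0^r\bigr)$); as written, ``enlarging $C_r$ to cover $n\le n_0$'' risks circularity if $n_0$ were allowed to depend on $C_r$.
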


%\begin{lemma}
%There is a constant $C_r>0$ so that for all $n\geq 1$,
%\begin{equation}\label{upper bound for II}
%\sII \leq  \frac {C_r}{n^r} \cdot \frac{(3L)^r}{n^{1/3}} ,   % \frac{(\log n)^{2r}}{n^{r+\tfrac 23 r-1}} ,
%\qquad
%\sIII \leq  \frac {C_r}{n^r}
%\end{equation}
%and
%\begin{equation}\label{upper bound for W_r(n)}
%W_r(n)\leq \frac{3C_r}{n^r} .
%\end{equation}
%\end{lemma}

\begin{proof}
Fix an integer $N_r\ge 1$ for which (recalling that $r\ge 2$)
\begin{equation}\label{eq:N bound}
  \sum_{m=N_r}^\infty \frac{1}{m^r} \le \frac{1}{6}\cdot\left(\frac{2}{3}\right)^r\quad\text{and}\quad \sup_{k\ge N_r}\frac{(3L(k))^r}{k^{\frac{2}{3}r-1}}\le \frac{1}{3}.
\end{equation}
Let $C_r>0$ be a sufficiently large constant, satisfying several lower bounds imposed below. We take $C_r\ge (N_r)^r$ so that \eqref{upper bound for W_r(n)} is satisfied for $1\le n\le N_r$, as $W_r(n)\le 1$ for all $n$.

%whose value will be specified (implicitly) later. Note that the bounds in \eqref{upper bound for II} are satisfied for $n=1$ as $C_r\ge 1$ (since $\sII=0, \sIII=1$ for $n=1$). We proceed to establish that \eqref{upper bound for II} holds for $n\ge 2$ and \eqref{upper bound for W_r(n)} holds for $n\ge 1$ using induction.
%
%%Recall that we set
%%$$ L=L(n)=1+(\log n)^2, \qquad a=a(n) = \lceil n^{2/3} \rceil .$$
%Let
%$$B_r = \sup_{a\geq 1}  a^{r-1} \sum_{m=a}^\infty \frac 1{m^{r}} .
%$$
%Choose $N_r\geq 1$ satisfying
%\begin{align}
%%N_r&\geq 10
%%\\
%N_r &\geq (100  B_r)^{1/(r-1)} ; \label{req 2 for T}
%\\
%\left(\frac{2}{2^{L(n)}}\right)^{r-1} & \leq \frac 1{10} , \qquad \forall n\geq N_r  ;\label{req 3 for T}
%\\
%\frac{B_r L(n)^r}{n^{\frac 23(r-1)}} & \leq \frac 1{10 } , \qquad \forall n\geq N_r  .
%\label{req 4 for T}
%\end{align}
%Now choose $C_r>0$ so that  \eqref{upper bound for II} and \eqref{upper bound for W_r(n)} hold for all $n\leq N_r$, and
%\begin{equation}\label{choose C_r}
%C_r  \geq 100 \sum_{m=0}^{N_r-1} W_r(m) .
%\end{equation}

Let $k>N_r$ and assume by induction that \eqref{upper bound for W_r(n)} holds with $1\le n<k$. We proceed to establish \eqref{upper bound for W_r(n)} with $n=k$. To this end, we first give upper bounds for $\sI(k)$, $\sII(k)$ and $\sIII(k)$.

By Lemma~\ref{lem:sI}, using that $3(r-1)> r$ and taking $C_r\ge 3$,
\begin{equation}\label{eq:upper bound for sI}
\sI(k)\leq  \frac{1}{k^{3(r-1)}} \leq  \frac {1}{3}  \frac {C_r}{k^r}
\end{equation}

For $\lambda \in \II(k)$, such that $T(\lambda) = t$,
 note that since $\lambda_t\geq 1$,
$$ \frac 1{t^{\lambda_t} \; \lambda_t!} \leq \frac 1t \cdot \frac 1{t^{\lambda_t-1}\;(\lambda_t-1)!}
$$
%we can write
%$$\lambda = (\hat \lambda,\overbrace{\lambda_t}^{t},0,\dots)$$
%with $\lambda_t\geq 1$ and $\hat \lambda$ a partition of $n-t\lambda_t$: $\hat \lambda \vdash n-t\lambda_t$,
and then by Cauchy's formula \eqref{eq:Cauchy_formula},
$$p(\lambda) =  \prod_{j=1}^{t-1} \frac 1{j^{\lambda_j}\;\lambda_j!} \cdot \frac 1{t^{\lambda_t}\;\lambda_t!} \leq
\frac 1t \cdot \left( \prod_{j=1}^{t-1} \frac 1{j^{\lambda_j}\;\lambda_j!} \right)\cdot  \frac 1{t^{\lambda_t-1}\;(\lambda_t-1)!}
=\frac 1t p(\tilde \lambda)$$
 where $\tilde \lambda =  (\lambda_1,\dots, \overbrace{\lambda_t-1}^{t},0,\dots)$ is a partition of $k-t$.
Hence
\begin{equation*}
\begin{split}
\sII(k) &=\sum_{\frac{k}{L}\leq t<k-a} \sum_{\substack{\lambda\in \II\\ T(\lambda)=t}} p(\lambda)^r
\\
&\leq  \sum_{\frac{k}{L}\leq t<k-a} \frac 1{t^r} \sum_{\tilde \lambda \vdash k-t} p(\tilde \lambda)^r
\\
& =  \sum_{\frac{k}{L}\leq t<k-a} \frac 1{t^r}  W_r(k-t) .
\end{split}
\end{equation*}
Using the induction hypothesis for $n=k-t$  we obtain
\begin{equation}\label{eq:upper bound for sII}
\begin{split}
\sII(k) &\leq   \sum_{\frac{k}{L(k)}\leq t<k-a(k)} \frac 1{t^r} \frac{C_r}{(k-t)^r}\\
&\le C_r \, k \left(\frac{L(k)}{k\, a(k)}\right)^r = \frac{C_r}{k^r}\frac{(3L(k))^r}{k^{\frac{2}{3}r-1}} \le \frac{1}{3}\frac{C_r}{k^r}
\end{split}
\end{equation}
where we substituted the definition of $a(k)$ and applied \eqref{eq:N bound}.

Next, using Lemma~\ref{lem:main term for sIII},
\begin{equation*}
\sIII(k) \leq \frac 1{(k-a)^r} \sum_{0\leq m\leq a} W_r(m)\leq  \left(\frac{3}{2k}\right)^r  \left( N_r +  \sum_{N_r\leq m\leq a}  \frac{C_r}{m^r} \right)
\end{equation*}
where we used the fact that $W_r(m)\le 1$ for all $m$ and the induction hypothesis for $N_r\leq n=m\leq a$. Applying \eqref{eq:N bound} and taking $C_r\ge 6 \left(\frac{3}{2}\right)^r N_r$, we conclude that
\begin{equation}\label{eq:upper bound for sIII}
\sIII(k)\leq  \left(\frac{3}{2k}\right)^r  \left( N_r +  \frac{1}{6}\left(\frac{2}{3}\right)^r C_r \right)\le \frac{1}{3}\frac {C_r}{k^r} .
\end{equation}

Finally, applying \eqref{bds for W_r(n)} and substituting \eqref{eq:upper bound for sI}, \eqref{eq:upper bound for sII} and \eqref{eq:upper bound for sIII} we conclude that
$$W_r(k)\leq \sI(k)+\sII(k) +\sIII(k) \le \frac{C_r}{k^r}.
$$
finishing the proof by induction of \eqref{upper bound for W_r(n)}.

To see \eqref{asymptotic bound for II}, note that as the bound \eqref{eq:upper bound for sII} is now verified for all large $k$ and as $r\ge 2$, we have
\begin{equation*}
  \sII(k) \leq \frac{C_r}{k^r}\frac{(3L(k))^r}{k^{\frac{2}{3}r-1}} = o\left(\frac{1}{k^r}\right)\quad\text{as $k\to\infty$}.\qedhere
\end{equation*}
\end{proof}

We are now in position to obtain the asymptotics of $\sIII$:
\begin{equation}\label{asymp of sIII}
\sIII  \sim \frac{A_r}{n^r},\qquad A_r = \sum_{m=0}^\infty W_r(m) .
\end{equation}
Indeed, by Lemma~\ref{lem:main term for sIII},
\begin{equation*}
\sIII  = \sum_{0\leq m\leq a } \frac 1{(n-m)^r} W_r(m)\sim \frac 1{n^r} \sum_{0\leq m\leq a }  W_r(m)
\end{equation*}
since $(n-m)^r\sim n^r$ uniformly for $0\le m\leq a=o(n)$. We now extend the sum, using our upper bound \eqref{upper bound for W_r(n)} and the fact that $a(n)\to\infty$, and obtain~\eqref{asymp of sIII}.
 
We can now prove Theorem~\ref{thm:equal cycle}: By  \eqref{bds for W_r(n)},
$$ W_r(n) = \sIII + O\Big( \sI+\sII\Big) \sim \frac{A_r}{n^r}$$
using \eqref{asymp of sIII}, Lemma~\ref{lem:sI} (recalling that $r\ge 2$ so that $3(r-1)>r$) and \eqref{asymptotic bound for II}.

\end{document}